\let\originalforall=\forall
\renewcommand{\forall}{\mathop{\vcenter{\hbox{\Large$\originalforall$}}}}
\let\originalexists=\exists
\renewcommand{\exists}{\mathop{\vcenter{\hbox{\Large$\originalexists$}}}}
\newtheorem{de}{Definition}
\newtheorem{tw}[de]{Theorem}
\newtheorem{prop}[de]{Proposition}
\newtheorem{lem}[de]{Lemma}
\newtheorem{cor}[de]{Corollary}
\title{On the Gelfand space of the measure algebra on the circle group}
\begin{document}
\author{Przemysław Ohrysko \\
Institute of Mathematics, Polish Academy of Sciences\\
00-956 Warszawa, Poland\\
E-mail: p.ohrysko@gmail.com \and
Michał Wojciechowski\thanks{The research of this author has been supported by NCN grant no. N N201 607840}\\
Institute of Mathematics, Polish Academy of Sciences\\
00-956 Warszawa, Poland\\
E-mail: miwoj-impan@o2.pl}

\maketitle
\begin{abstract}
This paper is devoted to studying certain topological properties
of the maximal ideal space of the measure algebra on the circle
group. In particular, we focus on Cech cohomologies of this space.
Moreover, we show that the Gelfand space of $M(\mathbb{T})$ is not
separable. On the other hand, we give a direct procedure to
recover many copies of $\beta\mathbb{Z}$ in
$\mathfrak{M}(M(\mathbb{T}))$, but we also show that this is
result is not accessible in the most natural way (namely, by the
canonical mapping induced by homomorphism assigning to measure its
Fourier - Stieltjes transform).
\end{abstract}
\section{Introduction}
Let $M(\mathbb{T})$ denote the convolution algebra of complex,
Borel and regular measures on the circle group. Also, let
$M_{d}(\mathbb{T})$ be the closed subalgebra of $M(\mathbb{T})$
consisting of discrete (purely atomic) measures while
$M_{c}(\mathbb{T})$ denotes the closed ideal of continuous
(non-atomic) measures. It follows from the general Gelfand theory
that $\mathfrak{M}(M(\mathbb{T}))$ - the space of all maximal
ideals or equivalently the space of all multiplicative - linear
functionals is compact a Hausdorff space in the weak$^{\ast}$
topology (those facts are proved in any textbook on Banach
algebras - see for example \cite{r2} or \cite{ż}). We are going to
give proofs of the basic topological properties of this space
using analytical and algebraic methods. Let us recall some
standard notations and basic facts which will be used in the
sequel (for details see \cite{kat}, \cite{h} and \cite{r1}).
\\
For $\mu\in M(\mathbb{T})$ and $n\in\mathbb{N}$ we define the
$n$-th Fourier - Stieltjes coefficient of a measure $\mu$ by the
formula
\begin{equation*}
\widehat{\mu}(n)=\int_{0}^{2\pi}e^{-int}d\mu(t).
\end{equation*}
It is well - known that the assignment
$\mu\mapsto\widehat{\mu}(n)$ is a multiplicative - linear
functional for every $n\in\mathbb{Z}$. In the same manner, the
mapping $\mu\mapsto (\widehat{\mu})_{n=-\infty}^{\infty}$ is
continuous homomorphism from $M(\mathbb{T})$ to
$l^{\infty}(\mathbb{Z})$. We will need the classical Wiener's
lemma which connects Fourier - Stieltjes coefficients with atoms
of the associated measure.
\begin{tw}[Wiener's lemma]
Let $\mu\in M(\mathbb{T})$. Then
\begin{equation*}
\lim_{N\rightarrow\infty}\frac{1}{2N+1}\sum_{n=-N}^{N}|\widehat{\mu}(n)|^{2}=\sum_{\tau\in\mathbb{T}}|\mu(\{\tau\})|^{2}.
\end{equation*}
In particular, $\mu\in M_{c}(\mathbb{T})$ if and only if
\begin{equation*}
\lim_{N\rightarrow\infty}\frac{1}{2N+1}\sum_{n=-N}^{N}|\widehat{\mu}(n)|^{2}=0.
\end{equation*}
\end{tw}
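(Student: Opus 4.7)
The plan is to rewrite the Cesaro average as a double integral against an explicit trigonometric kernel, pass to the limit using dominated convergence, and identify the limit as the ``diagonal mass'' of the product measure $\mu\otimes\overline{\mu}$ on $\mathbb{T}\times\mathbb{T}$.

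First I would expand $|\widehat{\mu}(n)|^{2}=\widehat{\mu}(n)\overline{\widehat{\mu}(n)}$ as an iterated integral and apply Fubini's theorem (justified because $\mu$ is a finite complex measure and the integrand is bounded) to obtain
\begin{equation*}
\frac{1}{2N+1}\sum_{n=-N}^{N}|\widehat{\mu}(n)|^{2}=\int_{\mathbb{T}}\int_{\mathbb{T}}K_{N}(t-s)\,d\mu(t)\,d\overline{\mu}(s),
\end{equation*}
where $K_{N}(x)=\frac{1}{2N+1}\sum_{n=-N}^{N}e^{-inx}$ is the normalized Dirichlet kernel. Summing the geometric series gives the closed form $K_{N}(x)=\frac{1}{2N+1}\cdot\frac{\sin((N+\frac{1}{2})x)}{\sin(x/2)}$ for $x\not\equiv0\pmod{2\pi}$ and $K_{N}(0)=1$, in particular $|K_{N}(x)|\le 1$ uniformly in $N$ and $x$.

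Next I would examine the pointwise behaviour of $K_{N}$ on $\mathbb{T}\times\mathbb{T}$ as $N\to\infty$: on the diagonal $\{t=s\}$ the kernel is identically $1$, while off the diagonal it tends to $0$ because the denominator $\sin((t-s)/2)$ is bounded away from $0$ and the numerator is bounded by $1$. Hence $K_{N}(t-s)$ converges pointwise on $\mathbb{T}\times\mathbb{T}$ to the indicator function of the diagonal $\Delta=\{(t,t):t\in\mathbb{T}\}$.

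Since $|K_{N}|\le 1$ and $|\mu|\otimes|\mu|$ is a finite measure on $\mathbb{T}\times\mathbb{T}$, the dominated convergence theorem yields
\begin{equation*}
\lim_{N\to\infty}\int_{\mathbb{T}\times\mathbb{T}}K_{N}(t-s)\,d(\mu\otimes\overline{\mu})=\int_{\mathbb{T}\times\mathbb{T}}\mathbf{1}_{\Delta}\,d(\mu\otimes\overline{\mu}).
\end{equation*}
The last integral equals $(\mu\otimes\overline{\mu})(\Delta)$, which is exactly the sum $\sum_{\tau\in\mathbb{T}}\mu(\{\tau\})\overline{\mu(\{\tau\})}=\sum_{\tau\in\mathbb{T}}|\mu(\{\tau\})|^{2}$, since the only points in $\Delta$ carrying mass are pairs $(\tau,\tau)$ where $\tau$ is a common atom of $\mu$ and $\overline{\mu}$ (i.e.\ an atom of $\mu$). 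The ``in particular'' clause is immediate since $\mu\in M_{c}(\mathbb{T})$ means exactly that $\mu$ has no atoms. The main subtle point is verifying that $\Delta$ is a genuine $\mu\otimes\overline{\mu}$-measurable set whose measure decomposes as the atomic sum; this follows because $\Delta$ is closed in $\mathbb{T}\times\mathbb{T}$ and Fubini applied to $\mathbf{1}_{\Delta}$ gives $\int|\mu|(\{t\})\,d|\mu|(t)$, which is supported on the at most countable atom set of $\mu$.
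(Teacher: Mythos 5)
Your argument is correct: this is the standard proof of Wiener's lemma (writing the Cesaro mean as $\int_{\mathbb{T}}\int_{\mathbb{T}}K_{N}(t-s)\,d\mu(t)\,d\overline{\mu}(s)$ with the normalized Dirichlet kernel, noting $|K_{N}|\le 1$ and $K_{N}(t-s)\to\mathbf{1}_{\Delta}(t,s)$ pointwise, applying dominated convergence, and identifying the diagonal mass of $\mu\otimes\overline{\mu}$ via Fubini as $\sum_{\tau}|\mu(\{\tau\})|^{2}$). The paper itself offers no proof of this theorem --- it is quoted as classical background with references to the textbooks of Katznelson, Helson and Rudin --- so there is nothing to compare against; your write-up matches the textbook argument, including the correct handling of the only delicate point, namely that the diagonal only charges the (at most countably many) atoms of $\mu$.
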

The following proposition is an easy exercise from many textbooks
on Banach algebras (see \cite{kan} and $\cite{r2}$)
\begin{prop}\label{na}
Let $A,B$ be complex, unital, commutative Banach algebras and
$f:A\mapsto B$ a continuous homomorphism. Then the mapping
$f^{\ast}:\mathfrak{M}(B)\mapsto \mathfrak{M}(A)$ defined by the
formula
\begin{equation*}
f^{\ast}(\varphi)(a)=\varphi(f(a))\text{ for
$\varphi\in\mathfrak{M}(B)$}
\end{equation*}
is continuous. Moreover, if the homomorphism $f$ has dense range,
then $f^{\ast}$ is injective.
\end{prop}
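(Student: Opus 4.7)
The plan is to treat the two assertions separately, relying on the weak$^{\ast}$ definition of the Gelfand topology and on the automatic continuity of multiplicative-linear functionals on Banach algebras.

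\textbf{Well-definedness of $f^{\ast}$.} Before anything else, I would check that $f^{\ast}(\varphi)$ really lies in $\mathfrak{M}(A)$. Linearity of $f^{\ast}(\varphi)=\varphi\circ f$ is immediate, and multiplicativity follows from the computation $\varphi(f(ab))=\varphi(f(a)f(b))=\varphi(f(a))\varphi(f(b))$, using that $f$ is a homomorphism and $\varphi$ is multiplicative. The functional is non-zero because $f$ preserves the unit and $\varphi(1_{B})=1$, so $f^{\ast}(\varphi)(1_{A})=1$.

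\textbf{Continuity.} I would argue directly from the definition of the weak$^{\ast}$ topology on $\mathfrak{M}(A)$ and $\mathfrak{M}(B)$. Take a net $\varphi_{\alpha}\to\varphi$ in $\mathfrak{M}(B)$; by definition $\varphi_{\alpha}(b)\to\varphi(b)$ for every $b\in B$. Fix $a\in A$ and apply this to $b=f(a)$, obtaining $f^{\ast}(\varphi_{\alpha})(a)=\varphi_{\alpha}(f(a))\to\varphi(f(a))=f^{\ast}(\varphi)(a)$. Since this holds for every $a\in A$, we have $f^{\ast}(\varphi_{\alpha})\to f^{\ast}(\varphi)$ weak$^{\ast}$ in $\mathfrak{M}(A)$, which is continuity.

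\textbf{Injectivity when $f(A)$ is dense.} Suppose $f^{\ast}(\varphi_{1})=f^{\ast}(\varphi_{2})$. By definition this means $\varphi_{1}$ and $\varphi_{2}$ agree on the subset $f(A)\subseteq B$. The crucial input now is the standard fact that every multiplicative-linear functional on a commutative unital Banach algebra is automatically continuous (in fact of norm $1$), so $\varphi_{1}$ and $\varphi_{2}$ are continuous maps $B\to\mathbb{C}$. Two continuous functions coinciding on a dense set coincide everywhere, hence $\varphi_{1}=\varphi_{2}$, proving injectivity.

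There is no real obstacle: the only subtlety worth flagging is the appeal to automatic continuity of characters, without which one could not pass from agreement on $f(A)$ to agreement on all of $B$. Everything else is a direct unwinding of definitions.
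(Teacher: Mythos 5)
Your proof is correct and is exactly the standard argument; the paper itself gives no proof, dismissing the proposition as an easy textbook exercise (citing Kaniuth and Rudin), and your three steps — well-definedness, the net/weak$^{\ast}$ continuity argument, and injectivity via automatic continuity of characters on a dense range — are precisely what those references do. The only hypothesis you should make explicit is that the homomorphism is assumed unital (otherwise $\varphi\circ f$ could vanish identically), which you correctly use but which is only implicit in the statement.
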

We will also make use of notion of almost periodic sequence (for
details and proofs of the next facts check \cite{kan})
\begin{de}
We say that the sequence of complex numbers
$(a_{n})_{n=-\infty}^{\infty}$ is almost periodic, if
\begin{equation*}
\forall_{\varepsilon>0}\exists_{p(\varepsilon)\in\mathbb{N}}\forall_{I}\exists_{m\in
I}\forall_{n\in\mathbb{Z}}|a_{n+m}-a_{n}|<\varepsilon,\text{ where
$I$ is any interval in $\mathbb{Z}$ of length $p(\varepsilon)$}.
\end{equation*}
The set of all almost periodic sequences will be denoted by
$AP(\mathbb{Z})$.
\end{de}
The most important properties of almost periodic sequences are
summarized in the following theorem.
\begin{tw}\label{dom}
$AP(\mathbb{Z})$ is closed $^{\ast}$-subalgebra of
$l^{\infty}(\mathbb{Z})$.
\end{tw}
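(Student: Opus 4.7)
The plan is to organize the proof around a Bochner-type characterization: a bounded sequence $(a_n)_{n\in\mathbb{Z}}$ lies in $AP(\mathbb{Z})$ if and only if its orbit $\{S^k a : k\in\mathbb{Z}\}$ under the shift $S\colon(a_n)\mapsto(a_{n+1})$ is totally bounded in $l^\infty(\mathbb{Z})$. Once this equivalence is in place, all of the algebraic and topological closure properties reduce to soft compactness arguments.

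First I would verify that $AP(\mathbb{Z})\subseteq l^\infty(\mathbb{Z})$. Applying the definition with $\varepsilon=1$ and $p=p(1)$, for every $k\in\mathbb{Z}$ the interval $[-k+1,-k+p]$ contains some $m$ with $\sup_n|a_{n+m}-a_n|<1$; taking $n=k$ gives $|a_k|\le|a_{k+m}|+1\le\max_{1\le j\le p}|a_j|+1$.

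Next I would establish the Bochner characterization. For the forward direction, given $\varepsilon>0$ set $p=p(\varepsilon)$; for any $k\in\mathbb{Z}$ the interval $[-k+1,-k+p]$ contains an $\varepsilon$-period $m$ of $a$, and $\|S^k a - S^{k+m}a\|_\infty = \|a - S^{m}a\|_\infty < \varepsilon$ with $k+m\in\{1,\ldots,p\}$, so $\{S^j a : 1\le j\le p\}$ is a finite $\varepsilon$-net for the whole orbit. Conversely, from a finite $\varepsilon$-net $\{S^{k_1}a,\ldots,S^{k_N}a\}$ with $k_1<\cdots<k_N$, the shift-invariance of the sup norm yields that for every $k$ some difference $k-k_i$ is an $\varepsilon$-period of $a$; hence the set of $\varepsilon$-periods meets every interval of length $k_N-k_1+1$, which is the defining property with $p(\varepsilon)\le k_N-k_1+1$.

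Armed with this equivalence, the remaining steps are routine. If $a,b\in AP(\mathbb{Z})$, then $\{(S^k a, S^k b) : k\in\mathbb{Z}\}$ is contained in the compact product $\overline{\{S^k a\}}\times\overline{\{S^k b\}}$, and its image under the addition and multiplication maps (the latter continuous on bounded subsets of $l^\infty(\mathbb{Z})$) is relatively compact, so $a+b$ and $ab$ lie in $AP(\mathbb{Z})$. Closure under scalar multiplication and complex conjugation is immediate, since these are isometric shift-commuting operations. For uniform closure, given $a^{(j)}\to a$ in $l^\infty(\mathbb{Z})$ with each $a^{(j)}\in AP(\mathbb{Z})$, pick $j$ with $\|a-a^{(j)}\|_\infty<\varepsilon/3$; then any finite $\varepsilon/3$-net for the orbit of $a^{(j)}$ is an $\varepsilon$-net for the orbit of $a$. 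The main obstacle I anticipate is the reverse direction of the Bochner criterion — producing a relatively dense set of $\varepsilon$-periods out of a finite $\varepsilon$-net — since the subsequent algebraic and topological closure statements are standard once that pivot has been established.
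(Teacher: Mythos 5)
Your argument is correct, but note that the paper does not prove this theorem at all: it is stated as a known fact with a pointer to Kaniuth's book \cite{kan}, so any proof you give is necessarily ``different from the paper's.'' What you have written is the classical Bochner-type proof, and it holds up: the boundedness argument, the equivalence of Bohr's definition with total boundedness of the shift orbit, and the reduction of all the algebraic and topological closure properties to compactness are all sound. The one step you rightly flag as the potential obstacle --- extracting relative density of the $\varepsilon$-periods from a finite $\varepsilon$-net $\{S^{k_1}a,\dots,S^{k_N}a\}$ --- does go through, but deserves the one extra line you omitted: given an interval $I=[t,t+k_N-k_1]$, apply your observation to $k=t+k_N$ to get an index $i$ with $k-k_i$ an $\varepsilon$-period, and note $k-k_i\in I$ since $k_1\le k_i\le k_N$. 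The payoff of your route over a direct attack on the Bohr definition is precisely that closure under addition and multiplication would otherwise require producing \emph{common} almost periods of two sequences, which is awkward; the compactness formulation makes that issue disappear. Two cosmetic remarks: to call $AP(\mathbb{Z})$ a $^{\ast}$-subalgebra you should also record that the constant sequences (in particular the unit) belong to it, which is trivial; and when you apply the multiplication map you should say explicitly that the image of $\{(S^ka,S^kb):k\in\mathbb{Z}\}$ is exactly the orbit $\{S^k(ab):k\in\mathbb{Z}\}$, which is what makes the compactness of the image relevant.
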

A very useful link between measures and almost periodic sequences
is given in the next proposition.
\begin{prop}\label{dysp}
The sequence of Fourier - Stieltjes coefficients of any discrete
measure on $\mathbb{T}$ is almost periodic.
\end{prop}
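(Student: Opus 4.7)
Any discrete measure can be written as $\mu=\sum_{k=1}^{\infty}c_{k}\delta_{\tau_{k}}$ with $\sum|c_{k}|<\infty$ and $\tau_{k}\in\mathbb{T}$, so its Fourier--Stieltjes sequence is
\begin{equation*}
\widehat{\mu}(n)=\sum_{k=1}^{\infty}c_{k}e^{-in\tau_{k}}.
\end{equation*}
The plan is to prove the statement in three steps of increasing generality: first for a single Dirac mass, then for finite combinations, and finally for arbitrary discrete measures by a closure argument using Theorem \ref{dom}.

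\paragraph{Step 1: point masses.} I would first check that for every $\tau\in\mathbb{T}$ the character sequence $a_{n}=e^{-in\tau}$ is almost periodic. The key observation is that $|a_{n+m}-a_{n}|=|e^{-im\tau}-1|$ is independent of $n$, so finding an $\varepsilon$-almost period amounts to finding an integer $m$ with $m\tau$ close to $0\pmod{2\pi}$. For any $\varepsilon>0$, a Dirichlet pigeonhole argument applied to the points $\{j\tau\bmod 2\pi:0\leq j\leq N\}$ for $N$ large enough (depending only on $\varepsilon$) yields an integer $m_{0}\in\{1,\ldots,N\}$ with $|e^{-im_{0}\tau}-1|<\varepsilon$. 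Because this conclusion applies to any starting point (translating the window $\{j\tau\}$ by any integer gives the same estimate), the set of $\varepsilon$-almost periods is relatively dense with gap length $p(\varepsilon)=N$, which is exactly the condition in the definition.

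\paragraph{Step 2: finite combinations.} Since $AP(\mathbb{Z})$ is a linear subspace of $l^{\infty}(\mathbb{Z})$ (part of Theorem \ref{dom}), any finite sum $\sum_{k=1}^{K}c_{k}e^{-in\tau_{k}}$ is almost periodic. Alternatively, one can find a common $\varepsilon$-almost period for finitely many characters directly by a simultaneous Dirichlet-type argument.

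\paragraph{Step 3: passage to the limit.} For a general discrete $\mu=\sum c_{k}\delta_{\tau_{k}}$, set $\mu_{K}=\sum_{k=1}^{K}c_{k}\delta_{\tau_{k}}$. Then
\begin{equation*}
\sup_{n\in\mathbb{Z}}|\widehat{\mu}(n)-\widehat{\mu_{K}}(n)|\leq\sum_{k>K}|c_{k}|\xrightarrow[K\to\infty]{}0,
\end{equation*}
so $(\widehat{\mu_{K}})_{K}$ converges to $\widehat{\mu}$ in $l^{\infty}(\mathbb{Z})$. Each $\widehat{\mu_{K}}$ lies in $AP(\mathbb{Z})$ by Step 2, and since $AP(\mathbb{Z})$ is closed in $l^{\infty}(\mathbb{Z})$ by Theorem \ref{dom}, the limit $\widehat{\mu}$ is almost periodic as well.

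\paragraph{Main obstacle.} The only nontrivial point is Step 1: the relative density of the set of $\varepsilon$-almost periods of a single character. Everything else is either a formal algebraic manipulation or a straightforward application of the already-quoted closure theorem; the classical Dirichlet/Kronecker-type approximation argument is what makes the whole scheme work.
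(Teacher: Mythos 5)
The paper does not actually prove this proposition; it is quoted from \cite{kan} ("for details and proofs of the next facts check \cite{kan}"), so there is no in-text argument to compare with. Your overall scheme (single character, then finite linear combinations, then norm closure of $AP(\mathbb{Z})$) is the standard route, and Steps 2 and 3 are fine once Theorem \ref{dom} is granted. However, Step 1 --- which you correctly single out as the only real content --- contains a genuine error. The pigeonhole argument applied to the translated window $\{j\tau : M\le j\le M+N\}$ produces two indices $j<k$ in that window whose \emph{difference} $m_{0}=k-j$ satisfies $|e^{-im_{0}\tau}-1|<\varepsilon$; but $m_{0}$ again lies in $\{1,\dots,N\}$, not in the translated interval. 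So "translating the window" only reproves the existence of one small almost period near the origin and does not show that every interval of length $N$ contains one. Indeed the claim $p(\varepsilon)=N$ is false: for $\tau=2\pi/K$ with $K$ large and $\varepsilon<\pi/2$, the $\varepsilon$-almost periods of $e^{-in\tau}$ are exactly the integers within about $K\varepsilon/2\pi$ of a multiple of $K$, and consecutive such blocks are separated by gaps of order $K$, which exceeds any bound depending on $\varepsilon$ alone. (The sequence is of course still almost periodic, with $p(\varepsilon)=K$.)

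What you need is the classical fact that the Bohr set $\{m\in\mathbb{Z}: |e^{-im\tau}-1|<\varepsilon\}$ is relatively dense, and the repair is short. If $m_{0}\tau\equiv 0 \pmod{2\pi}$ for some $m_{0}\ge 1$ the sequence is periodic and there is nothing to prove; otherwise take $m_{0}$ from your pigeonhole step and put $\theta=m_{0}\tau\bmod 2\pi$, represented in $(-\pi,\pi]$, so that $0<|\theta|<\delta$ where $\delta$ is chosen with $|e^{i s}-1|<\varepsilon$ for $|s|<\delta$. The points $k\theta\bmod 2\pi$ advance around the circle in steps of length $|\theta|<\delta$ and therefore cannot skip over the arc $(-\delta,\delta)$; consequently among any $\lceil 2\pi/|\theta|\rceil+1$ consecutive values of $k$ there is one with $k\theta\bmod 2\pi$ in that arc. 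The integers $km_{0}$ then form a relatively dense set of $\varepsilon$-almost periods with gap $p(\varepsilon)=m_{0}\left(\lceil 2\pi/|\theta|\rceil+1\right)$ --- a constant depending on $\tau$ as well as on $\varepsilon$, which the definition permits. With this correction Steps 2 and 3 go through unchanged.
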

\section{Cech cohomologies}
We begin this section with a simple proposition (in fact we work
with $0$ - dimensional Cech cohomologies - see \cite{t}).
\begin{prop}
There are only countably many open and closed subsets in
$\mathfrak{M}(M(\mathbb{T}))$.
\end{prop}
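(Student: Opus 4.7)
The approach is to put clopen subsets of $\mathfrak{M}(M(\mathbb{T}))$ in bijection with idempotents of the algebra $M(\mathbb{T})$ and then count those.

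First, I would apply the Shilov idempotent theorem to the commutative unital Banach algebra $M(\mathbb{T})$: every clopen $U\subseteq\mathfrak{M}(M(\mathbb{T}))$ has the form $\{\varphi\in\mathfrak{M}(M(\mathbb{T})):\varphi(\mu)=1\}$ for some $\mu\in M(\mathbb{T})$ satisfying $\mu*\mu=\mu$. The correspondence $U\mapsto\mu$ is injective because $M(\mathbb{T})$ is semisimple: the evaluations $\mu\mapsto\widehat{\mu}(n)$, $n\in\mathbb{Z}$, already separate points of $M(\mathbb{T})$ by uniqueness of the Fourier--Stieltjes transform, so the Gelfand transform is injective on $M(\mathbb{T})$. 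Hence counting clopen sets reduces to counting idempotent measures on $\mathbb{T}$.

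Second, I would invoke the Cohen--Helson--Rudin idempotent theorem, which describes the idempotent measures on any LCA group. Applied to $\mathbb{T}$, it says that $\mu\mapsto\{n\in\mathbb{Z}:\widehat{\mu}(n)=1\}$ is a bijection between idempotent measures in $M(\mathbb{T})$ and elements of the coset ring of the dual group $\mathbb{Z}$. (That $\mu*\mu=\mu$ forces $\widehat{\mu}\in\{0,1\}^{\mathbb{Z}}$ is trivial; the content of the theorem is the characterisation of which such sequences actually occur as Fourier--Stieltjes transforms.)

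Third, I would check that the coset ring of $\mathbb{Z}$ is countable. The closed subgroups of $\mathbb{Z}$ are the groups $k\mathbb{Z}$ for $k\geq 0$, so the generating cosets are the arithmetic progressions $a+k\mathbb{Z}$ indexed by $\mathbb{Z}\times\mathbb{N}$, a countable family; every element of the coset ring is a finite Boolean combination of these, and a countable set admits only countably many finite subsets. Combining the three steps yields the statement.

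The main obstacle is the reliance on Cohen's theorem, which is a substantial piece of harmonic analysis. A more elementary route might try to use Wiener's lemma to split a hypothetical idempotent as $\mu=\mu_{d}+\mu_{c}$ and then classify the discrete idempotents directly (they are essentially finite combinations of normalized Haar measures on finite subgroups of $\mathbb{T}$, i.e.\ arithmetic progressions on the Fourier side), but I expect any such attempt to converge on the content of Cohen's theorem.
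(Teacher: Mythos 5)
Your proposal is correct and follows essentially the same route as the paper: the Shilov idempotent theorem reduces the count to idempotent measures, and the idempotent theorem (Helson's version for $\mathbb{T}$ suffices, which is what the paper cites, rather than the full Cohen theorem) identifies these with the countable coset ring of $\mathbb{Z}$.
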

\begin{proof}
By the Shilov idempotent theorem for every open and closed subset
$U$ in $\mathfrak{M}(M(\mathbb{T}))$ there exists an idempotent
measure $\mu\in M(\mathbb{T})$ such that $\widehat{\mu}=\chi_{U}$.
On the other hand, by Helson's theorem (see \cite{h}) which
characterises idempotents in $M(\mathbb{T})$, there are only
countably many idempotents in $M(\mathbb{T})$ (it follows from the
fact that compact subgroups of $\mathbb{T}$ are finite and there
are only countably many of them).
\end{proof}
Despite the last proposition, we will show at the end of Section
$3$ that $\mathfrak{M}(\mathbb{T})$ contains many copies of
$\beta\mathbb{Z}$ (which is extremally disconnected!) as closed
topological subspaces.
\begin{prop}\label{rozl}
There are a continuum of pairwise disjoint copies of
$\beta\mathbb{Z}$ in $\mathfrak{M}(M(\mathbb{T}))$.
\end{prop}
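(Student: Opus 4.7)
My strategy would be to construct a continuum-indexed family of homeomorphic copies of $\beta\mathbb{Z}$ in $\mathfrak{M}(M(\mathbb{T}))$, obtained as translates of one canonical copy under suitable algebra automorphisms of $M(\mathbb{T})$, and then verify pairwise disjointness.

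The first ingredient is the canonical embedding $\Phi\colon \beta\mathbb{Z} \to \mathfrak{M}(M(\mathbb{T}))$ defined by $\Phi(p)(\mu) := \lim_{n \to p}\widehat{\mu}(n)$. The limit exists since $\widehat{\mu}$ is bounded; weak-$\ast$ continuity of $\Phi$ is immediate from the universal property of the Stone--\v{C}ech compactification applied to each evaluation $\mu$, while multiplicativity follows from $\widehat{\mu \ast \nu}(n) = \widehat{\mu}(n)\widehat{\nu}(n)$. For injectivity, one needs the Fourier--Stieltjes transforms $\{\widehat{\mu} : \mu \in M(\mathbb{T})\}$ to separate points of $\beta\mathbb{Z}$: I would combine the Riemann--Lebesgue lemma (yielding a dense copy of $c_0(\mathbb{Z})$ via $L^1$-measures, which distinguishes any pair involving a principal ultrafilter) with continuous singular measures of Riesz-product type (whose coefficients oscillate enough to separate pairs of free ultrafilters).

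To generate the continuum of copies, for each $t \in \mathbb{T}$ I would use the translation automorphism $T_t\colon \mu \mapsto \delta_t \ast \mu$, an isometric algebra automorphism of $M(\mathbb{T})$ inducing a homeomorphism $T_t^{\ast}$ of $\mathfrak{M}(M(\mathbb{T}))$. Setting $\Phi_t := T_t^{\ast} \circ \Phi$ and $X_t := \Phi_t(\beta\mathbb{Z})$ yields, for each $t$, a homeomorphic copy of $\beta\mathbb{Z}$. The main obstacle is pairwise disjointness of the $X_t$'s for $t$ in a suitably chosen continuum subset $A \subseteq \mathbb{T}$. A direct calculation shows that the equation $\Phi_t(p) = \Phi_s(q)$, tested on point masses, forces the character identity $\chi_p = \chi_q$ on $\mathbb{T}_d$ together with $\chi_p(t - s) = 1$, where $\chi_p(x) := \lim_{n \to p} e^{-inx}$. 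Since free ultrafilters generate non-standard characters of discrete $\mathbb{T}$ with arbitrarily large kernels, mere rational independence of $t - s$ does not exclude overlap. To circumvent this, I would enrich the family of automorphisms beyond Dirac translations --- e.g.\ by composing with convolutions by a continuum-sized family of Raikov-independent continuous singular measures --- so that the induced $T^{\ast}$-action shifts the free-ultrafilter fibres of $X_t$ genuinely apart. Alternatively, one may apply a Zorn-type argument to the family of pairwise disjoint $\beta\mathbb{Z}$-copies obtainable from isometric automorphisms of $M(\mathbb{T})$, invoking the strong non-separability of $\mathfrak{M}(M(\mathbb{T}))$ established elsewhere in the paper to guarantee a maximal family of size at least the continuum.
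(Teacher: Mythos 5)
Your proposal has a fatal gap at its very first step. The injectivity of the canonical map $\Phi\colon\beta\mathbb{Z}\to\mathfrak{M}(M(\mathbb{T}))$, $\Phi(p)(\mu)=\lim_{n\to p}\widehat{\mu}(n)$, is precisely what this paper \emph{disproves}: by Proposition \ref{nr}, injectivity of $\Phi=f^{\ast}$ is equivalent to density of the range of $\mu\mapsto(\widehat{\mu}(n))_{n}$ in $l^{\infty}(\mathbb{Z})$, and the first main theorem of Section 3 shows this range is \emph{not} dense (the unit-step sequence stays at a fixed positive distance from every $\widehat{\mu}$, by almost periodicity of $\widehat{\mu_{d}}$ together with Wiener's lemma applied to $\mu_{c}$). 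The Riemann--Lebesgue lemma only separates principal ultrafilters from the rest, and Riesz products do not separate arbitrary pairs of free ultrafilters; no family of Fourier--Stieltjes transforms can, since they do not generate a dense $^{\ast}$-subalgebra of $C(\beta\mathbb{Z})$. So there is no ``canonical copy'' of all of $\beta\mathbb{Z}$ to translate around. The paper's repair is to shrink the index set: one restricts to $\beta A$ where $A$ is a subset of a lacunary sequence $(n_k)$ with $n_{k+1}/n_k\geq 3$. For distinct ultrafilters $U,V$ on $A$ one picks disjoint $X\in U$, $Y\in V$ and uses the Riesz product $\mu_X$: its transform equals $1/2$ on $X$ and vanishes off $\widetilde{X}$, and the arithmetic Lemma \ref{ari} guarantees $Y\cap\widetilde{X}$ is finite, so $\lim_U\widehat{\mu_X}=1/2\neq 0=\lim_V\widehat{\mu_X}$.

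Your second step is also not an argument. You concede that Dirac translations do not force disjointness, and the two fallbacks you offer do not close the gap: composing with convolutions by singular measures is not an automorphism of $M(\mathbb{T})$ and is left entirely unspecified, while the Zorn-type argument fails because maximality of a pairwise disjoint family of $\beta\mathbb{Z}$-copies gives no lower bound on its cardinality (non-separability of $\mathfrak{M}(M(\mathbb{T}))$ does not prevent a maximal such family from being finite or countable). The paper instead obtains continuum many disjoint copies directly from a Sierpi\'nski almost disjoint family $\{A_{\alpha}\}$ of subsets of the lacunary set: since $A_{\alpha}\cap A_{\beta}$ is finite, Lemma \ref{ari} gives that $\widetilde{A_{\alpha}}\cap\widetilde{A_{\beta}}$ is finite, and the same Riesz-product evaluation shows $\Lambda_{\alpha}(\beta A_{\alpha})\cap\Lambda_{\beta}(\beta A_{\beta})=\emptyset$ for $\alpha\neq\beta$. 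You would need to replace both halves of your plan with something along these lines.
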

From general topology, if a compact space has a countable basis,
then it has power at most continuum. But $\beta\mathbb{Z}$ is a
topological space of power $2^{c}$ which based on two last
propositions gives the following corollary.
\begin{cor}
$\mathfrak{M}(M(\mathbb{T}))$ is not a totally disconnected
topological space.
\end{cor}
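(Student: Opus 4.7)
The corollary is a clean cardinality argument combining the two propositions just stated, and the sketch given in the paragraph between them already contains the strategy; my plan is to make it precise.

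My plan is to argue by contradiction. Suppose $\mathfrak{M}(M(\mathbb{T}))$ is totally disconnected. Since it is compact Hausdorff (as noted in the introduction), a standard theorem of general topology guarantees that it is then zero-dimensional, i.e. the family of clopen subsets forms a basis of its topology. This is the only non-routine topological fact invoked, but it is classical (it appears in any textbook treatment of Stone spaces).

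Next I would apply the first proposition of Section 2: there are only countably many clopen subsets of $\mathfrak{M}(M(\mathbb{T}))$. Combined with the previous step, this means $\mathfrak{M}(M(\mathbb{T}))$ admits a countable basis. A compact Hausdorff space with a countable basis is second countable and hence metrizable (by Urysohn), and in particular has cardinality at most $\mathfrak{c}$.

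On the other hand, Proposition \ref{rozl} embeds $\beta\mathbb{Z}$ as a (closed) topological subspace of $\mathfrak{M}(M(\mathbb{T}))$. Since $|\beta\mathbb{Z}|=2^{\mathfrak{c}}$, and the cardinality of a subspace cannot exceed that of the ambient space, we obtain $2^{\mathfrak{c}}\le \mathfrak{c}$, a contradiction. Thus $\mathfrak{M}(M(\mathbb{T}))$ cannot be totally disconnected.

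The only substantive obstacle is the invocation of the equivalence between total disconnectedness and zero-dimensionality in the compact Hausdorff setting; everything else is bookkeeping with the two propositions. Note that for the corollary one does not really need the full strength of Proposition \ref{rozl} (a continuum of disjoint copies) — a single embedded copy of $\beta\mathbb{Z}$ already suffices to break the cardinality bound.
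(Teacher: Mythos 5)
Your argument is correct and is precisely the one the paper intends: countably many clopen sets (via Shilov/Helson) plus total disconnectedness of a compact Hausdorff space would force a countable basis, hence cardinality at most $\mathfrak{c}$, contradicting the embedded copy of $\beta\mathbb{Z}$ of cardinality $2^{\mathfrak{c}}$. Your observation that a single copy of $\beta\mathbb{Z}$ suffices is also accurate; the full strength of Proposition \ref{rozl} is not needed here.
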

This corollary can also be proved in different way - it follows
from Shilov idempotent theorem that if the Gelfand space of
commutative Banach algebra with unit is totally disconnected, then
this Banach algebra is regular which is not the case for
$M(\mathbb{T})$.
\\
We move on to $1$ - dimensional Cech cohomologies (a more detailed
discussion of related topics is given in \cite{t}).
\begin{de}
Let $K$ be a compact Hausdorff space and let
\begin{equation*}
\mathrm{Exp}(C(K))=\{\exp(f):f\in C(K)\}.
\end{equation*}
Then $\mathrm{Exp}(C(K))$ is a closed subgroup of $G(C(K))$ (group
of invertible elements in $C(K)$). We define $1$ - dimensional
Cech cohomologies $H^{1}(K)$ as the quotient group
\begin{equation*}
H^{1}(K):=G(C(K))/\mathrm{Exp}(C(K)).
\end{equation*}
For a commutative, complex Banach algebra with unit $A$ we define
$1$ - dimensional cohomologies of $H^{1}(A)$ analogously:
\begin{equation*}
H^{1}(A):=G(A)/\mathrm{Exp}(A).
\end{equation*}
\end{de}
It is an elementary fact that the Gelfand transform
$a\mapsto\widehat{a}$ induces a homomorphism of $H^{1}(A)$ into
$H^{1}(\mathfrak{M}(A))$. However, the following theorem (not
elementary at all!), with its proofs given in \cite{a}, \cite{r}
or \cite{g} based on complex analysis in several variables, gives
a much stronger statement.
\begin{tw}[Arens-Royden]\label{arr}
If $A$ is a complex, commutative Banach algebra with unit, then
the map $H^{1}(A)\mapsto H^{1}(\mathfrak{M}(A))$ induced by the
Gelfand transform is an isomorphism.
\end{tw}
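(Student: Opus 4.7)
The plan is to reduce the theorem to a statement about connected components of the group of invertibles. For any commutative unital Banach algebra $B$, the subgroup $\mathrm{Exp}(B)$ is open in $G(B)$ (since $\exp$ is a local homeomorphism near $0$ via the power series for $\log(1+x)$) and hence clopen as a subgroup; therefore $\mathrm{Exp}(B)$ coincides with the connected component $G(B)_{0}$ of the identity, and $H^{1}(B)=G(B)/G(B)_{0}$ is simply the group of connected components of $G(B)$. Well-definedness of the induced map is then immediate: the Gelfand transform is a continuous unital homomorphism $A\to C(\mathfrak{M}(A))$, so it carries $G(A)$ into $G(C(\mathfrak{M}(A)))$ and $\mathrm{Exp}(A)$ into $\mathrm{Exp}(C(\mathfrak{M}(A)))$, and the induced map $H^{1}(A)\to H^{1}(\mathfrak{M}(A))$ is a well-defined group homomorphism.

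For injectivity, suppose $\widehat{a}$ admits a continuous logarithm $g$ on $\mathfrak{M}(A)$. I would approximate $g$ uniformly on $\mathfrak{M}(A)$ by polynomials in a finite collection of Gelfand transforms. This rests on the Shilov--Arens--Calderon identification of the joint spectrum $\sigma_{A}(a_{1},\ldots,a_{n})$ with the image $\{(\widehat{a_{1}}(\varphi),\ldots,\widehat{a_{n}}(\varphi)):\varphi\in\mathfrak{M}(A)\}\subset\mathbb{C}^{n}$, together with polynomial approximation on this compact set. Iterating with sufficiently fast decay in the error terms, I would lift $g$ back to an element $b\in A$ with $\exp(b)=a$; equivalently, I would construct a continuous path in $G(A)$ from $a$ to $1$ and read off its endpoint as an exponential.

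For surjectivity, given $f\in G(C(\mathfrak{M}(A)))$, I must find $a\in G(A)$ such that $\widehat{a}\cdot f^{-1}\in\mathrm{Exp}(C(\mathfrak{M}(A)))$. The strategy is to approximate $f$ uniformly, up to a factor in $\mathrm{Exp}$, by a function of the form $P(\widehat{a_{1}},\ldots,\widehat{a_{n}})$ for a polynomial $P$, and then invoke the holomorphic functional calculus to lift $P$ to an element of $A$. The main obstacle, and the reason complex analysis in several variables is indispensable, is that a crude Stone--Weierstrass approximation would destroy the holomorphicity needed to appeal to the functional calculus; one must instead use the Oka--Weil theorem to approximate holomorphic functions on the polynomially convex hull of the joint spectrum, and control the correction factor so that it lies in $\mathrm{Exp}$ (which is where the simply connected geometry of logarithm domains enters). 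This is the technical heart of the argument, and I would follow \cite{a}, \cite{r}, or \cite{g} for the detailed execution rather than attempt to reproduce it from scratch.
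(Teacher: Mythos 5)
The paper does not prove this theorem at all: it is quoted as a known, deep result, with the proof explicitly delegated to \cite{a}, \cite{r} and \cite{g}, and the surrounding text only remarks that the argument rests on several complex variables. So there is no in-paper proof to compare against, and your proposal must be judged against the standard argument in those references. Your outline is faithful to that standard argument: the identification of $\mathrm{Exp}(B)$ with the identity component of $G(B)$ (via openness of $\mathrm{Exp}$ and the local inverse $\log(1+x)$) is correct and gives well-definedness for free; injectivity does indeed go through the Shilov--Arens--Calderon description of the joint spectrum plus an approximation-and-lifting argument; surjectivity does hinge on approximating $f$ uniformly by a Gelfand transform of an invertible element, after which $\widehat{a}f^{-1}$ is close to $1$ and hence an exponential, with Oka--Weil supplying the holomorphic approximation that a naive Stone--Weierstrass step would destroy. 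You have correctly located where the real difficulty lives.

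That said, what you have written is an annotated table of contents for the proof, not a proof: both the injectivity step (``iterating with sufficiently fast decay in the error terms, I would lift $g$ back to an element $b\in A$'') and the surjectivity step are stated as intentions and then explicitly deferred to the same references the paper cites. Since the paper itself treats the theorem as imported background, this is an acceptable level of detail for the role the statement plays here, but it should be presented as a citation with commentary rather than as a proof. If you intend it as a genuine proof, the missing content is precisely the Arens--Calderon lemma (that the joint spectrum of $a_{1},\dots,a_{n}$ can be made polynomially convex after adjoining finitely many further elements) and the Oka extension theorem; without executing those, neither direction of the isomorphism is established.
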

We will give two applications of the Arens-Royden theorem. The
first one concerns $\beta\mathbb{Z}$ (this fact is probably known
in algebraic topology but using the above methods we will obtain
it very quickly).
\begin{prop}
$H^{1}(\beta\mathbb{Z})$ is trivial.
\end{prop}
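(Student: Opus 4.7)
The plan is to apply the Arens–Royden theorem (Theorem~\ref{arr}) with a conveniently chosen Banach algebra whose Gelfand space is $\beta\mathbb{Z}$. The natural choice is $A = l^{\infty}(\mathbb{Z})$: as a commutative unital $C^{*}$-algebra of bounded functions on the discrete space $\mathbb{Z}$, its maximal ideal space is precisely the Stone--\v{C}ech compactification $\beta\mathbb{Z}$. Thus by Arens--Royden
\begin{equation*}
H^{1}(\beta\mathbb{Z}) \cong H^{1}(l^{\infty}(\mathbb{Z})) = G(l^{\infty}(\mathbb{Z}))/\mathrm{Exp}(l^{\infty}(\mathbb{Z})),
\end{equation*}
so it suffices to show that every invertible element of $l^{\infty}(\mathbb{Z})$ is an exponential.

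The idea is that on a totally disconnected ``base space'' one can take logarithms pointwise without worrying about monodromy. Concretely, given $a=(a_{n})_{n\in\mathbb{Z}}\in G(l^{\infty}(\mathbb{Z}))$, invertibility in $l^{\infty}(\mathbb{Z})$ is equivalent to $\inf_{n}|a_{n}|>0$ and $\sup_{n}|a_{n}|<\infty$. Define $b_{n}=\log|a_{n}|+i\arg(a_{n})$, choosing the principal branch $\arg(a_{n})\in(-\pi,\pi]$. Then $\log|a_{n}|$ is bounded (both above and below, since $|a_{n}|$ is bounded and bounded away from $0$), and $|\arg(a_{n})|\leq\pi$ is automatically bounded. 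Hence $b=(b_{n})\in l^{\infty}(\mathbb{Z})$ and clearly $\exp(b)=a$ coordinatewise. This shows $G(l^{\infty}(\mathbb{Z}))=\mathrm{Exp}(l^{\infty}(\mathbb{Z}))$ and therefore $H^{1}(l^{\infty}(\mathbb{Z}))$ is trivial.

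There is essentially no obstacle here; the only point to emphasize is \emph{why} we are entitled to make coordinatewise choices of the argument without a continuity constraint: it is because the elements of $l^{\infty}(\mathbb{Z})$ are functions on the \emph{discrete} set $\mathbb{Z}$, so no compatibility between neighbouring values is required. The work of relating this to the much larger space $\beta\mathbb{Z}$ is done entirely by the Arens--Royden theorem, which transfers the cohomological question from the intricate compactification back to the concrete sequence algebra.
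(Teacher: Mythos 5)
Your proof is correct and follows essentially the same route as the paper: apply Arens--Royden to $l^{\infty}(\mathbb{Z})$ and show every invertible bounded sequence is an exponential by choosing bounded pointwise logarithms (the paper likewise picks logarithms with bounded imaginary part and checks the real parts are bounded, then verifies that the coordinatewise exponential agrees with the Banach-algebra exponential via the power series). The only difference is cosmetic: you fix the principal branch of the argument explicitly, while the paper just asserts the imaginary parts can be taken bounded.
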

\begin{proof}
Let us take $(a_{n})_{n=-\infty}^{\infty}\in
G(l^{\infty}(\mathbb{Z}))$. Remembering that
\begin{equation*}
\sigma((a_{n})_{n=-\infty}^{\infty})=\overline{\{a_{n}:n\in\mathbb{Z}\}}
\end{equation*}
we conclude $a_{n}\neq 0$ for every $n\in\mathbb{Z}$. Hence, there
exists a sequence $(b_{n})_{n=-\infty}^{\infty}$ of complex
numbers such that $\exp(b_{n})=a_{n}$ for every $n\in\mathbb{Z}$.
We can also assume that the sequence of real numbers
$(\mathrm{Im}b_{n})_{n=-\infty}^{\infty}$ is bounded. Now, we have
$|a_{n}|=|\exp(b_{n})|=\exp(\mathrm{Re}b_{n})$ for
$n\in\mathbb{Z}$. The sequence $(a_{n})_{n=-\infty}^{\infty}$ is
bounded and separated from $0$ so we obtain
$c_{1}\leq\mathrm{Re}b_{n}\leq c_{2}$ for some constants
$c_{1},c_{2}\in\mathbb{R}$ and every $n\in\mathbb{Z}$ which proves
$(b_{n})_{n=-\infty}^{\infty}\in l^{\infty}(\mathbb{Z})$. Finally,
\begin{gather*}
\exp((b_{n})_{n=-\infty}^{\infty})=\sum_{k=0}^{\infty}\frac{(b_{n}^{k})_{n=-\infty}^{\infty}}{k!}=\\
=\left(\sum_{k=0}^{\infty}\frac{b_{n}^{k}}{k!}\right)_{n=-\infty}^{\infty}=\left(\exp(b_{n})\right)_{n=-\infty}^{\infty}=(a_{n})_{n=-\infty}^{\infty}
\end{gather*}
which shows that every invertible bounded sequence belongs to
$\mathrm{Exp}(l^{\infty}(\mathbb{Z}))$. This finishes the proof
with the aid of Arens-Royden theorem.
\end{proof}
The second application of Theorem $\ref{arr}$ relates to
$\mathfrak{M}(M(\mathbb{T}))$. It shows that this space is
extremely complicated from the point of view of algebraic
topology.
\begin{tw}
$H^{1}(M(\mathbb{T}))$ is uncountable.
\end{tw}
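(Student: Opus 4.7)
The plan is to invoke the Arens--Royden theorem (Theorem \ref{arr}), which identifies $H^1(M(\mathbb{T}))$ with $G(M(\mathbb{T}))/\mathrm{Exp}(M(\mathbb{T}))$, and to produce uncountably many distinct cosets. The natural candidates are the Dirac measures $\delta_t$, $t\in[0,2\pi)$: each is a unit with $\delta_t\delta_s^{-1} = \delta_{t-s}$, so it is enough to show that $\delta_t \notin \mathrm{Exp}(M(\mathbb{T}))$ whenever $t/(2\pi)$ is irrational. Since $\mathbb{R}/(2\pi\mathbb{Q})$ is uncountable, picking a set of representatives $T\subset[0,2\pi)$ that injects into it would then supply uncountably many pairwise distinct cosets $\delta_t\,\mathrm{Exp}(M(\mathbb{T}))$.

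So assume $\delta_t = \exp(\nu)$ with $\nu\in M(\mathbb{T})$. The first step is a reduction to the purely discrete setting: since $M_c(\mathbb{T})$ is a closed ideal, the Lebesgue decomposition is compatible with convolution, $(\mu*\eta)_d = \mu_d * \eta_d$, so the projection $\pi:M(\mathbb{T})\to M_d(\mathbb{T})$, $\mu\mapsto \mu_d$, is a continuous homomorphism; applying $\pi$ to the equation and using $\pi(\delta_t)=\delta_t$ gives $\delta_t = \exp(\nu_d)$ with $\nu_d\in M_d(\mathbb{T})$. By Proposition \ref{dysp}, the sequence $(\widehat{\nu_d}(n))_n$ lies in $AP(\mathbb{Z})$, and it is automatically bounded. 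From $\exp(\widehat{\nu_d}(n))=e^{-int}$ one gets integers $k_n$ with $\widehat{\nu_d}(n) = -int + 2\pi i k_n$.

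The crux is now an interplay between almost periodicity and the integrality of the $k_n$. Setting $c_n := k_{n+1}-k_n$, the identity
\[
2\pi i\,c_n \;=\; \widehat{\nu_d}(n+1)-\widehat{\nu_d}(n) + it
\]
exhibits $(c_n)_n$ as an almost periodic (by Theorem \ref{dom}), $\mathbb{Z}$-valued, bounded sequence. A bounded $\mathbb{Z}$-valued almost periodic sequence must be periodic: applying the definition with $\varepsilon<1/2$, any $\varepsilon$-almost period $m$ satisfies $|c_{n+m}-c_n|<1/2$ with $c_{n+m}-c_n\in\mathbb{Z}$, so $c_{n+m}=c_n$ for every $n$, and the exact periods form a nonzero subgroup $P\mathbb{Z}$ of $\mathbb{Z}$. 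With $S:=\sum_{j=0}^{P-1}c_j\in\mathbb{Z}$, telescoping gives $k_{n+P}=k_n+S$, so $\widehat{\nu_d}(n+P)-\widehat{\nu_d}(n) = -iPt+2\pi iS$ is constant in $n$. Iterating along the arithmetic progression $n=jP$, boundedness of $\widehat{\nu_d}$ forces this constant to vanish, whence $t/(2\pi)=S/P\in\mathbb{Q}$, contradicting our assumption. I expect the main obstacle to be precisely the sublemma that a bounded integer-valued almost periodic sequence is periodic; everything else is either an algebraic reduction (to $M_d(\mathbb{T})$) or a telescoping computation exploiting the ingredients already built up in the paper.
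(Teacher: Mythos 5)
Your proposal is correct and follows essentially the same route as the paper: Dirac measures $\delta_t$ with $t/(2\pi)$ irrational, reduction to the discrete part (the paper argues via the ideal property of $M_{c}(\mathbb{T})$ and invertibility of $\exp(\mu_d)$ rather than your projection homomorphism, but it is the same fact), and then almost periodicity of $(\widehat{\nu_d}(n))$ forcing the integer corrections $k_n$ to grow linearly along an arithmetic progression, contradicting boundedness of the Fourier--Stieltjes coefficients. Your sublemma that a bounded integer-valued almost periodic sequence is periodic is just a clean repackaging of the paper's direct step, where an almost period $m$ for $\varepsilon=1$ forces $l_{n+m}-l_n$ to be a constant $s$, playing exactly the roles of your $P$ and $S$.
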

\begin{proof}
Let us take $\alpha\in\mathbb{T}$, $\alpha\notin\pi\mathbb{Q}$.
Then $\delta_{\alpha}\in G(M(\mathbb{T}))$. We will show that the
assumption $\delta_{\alpha}\in\mathrm{Exp}(M(\mathbb{T}))$ leads
to a contradiction. If $\mu\in M(\mathbb{T})$ is such that
$\delta_{\alpha}=\exp(\mu)$, then for every $n\in\mathbb{Z}$ we
have
\begin{equation*}
\exp(-in\alpha)=\widehat{\delta_{\alpha}}(n)=\exp(\widehat{\mu}(n)).
\end{equation*}
Then,
\begin{equation*}
-in\alpha=\widehat{\mu}(n)+2\pi il_{n}\text{ for some
$l_{n}\in\mathbb{Z}$}.
\end{equation*}
Moreover, we easily see that we can assume that $\mu$ is a
discrete measure. Indeed, if $\mu=\mu_{c}+\mu_{d}$ is the standard
decomposition of a measure $\mu$, then
\begin{equation*}
\delta_{\alpha}=\exp(\mu_{c})\ast\exp(\mu_{d})=(\exp(\mu_{c})-\delta_{0})\ast\exp(\mu_{d})+\exp(\mu_{d}).
\end{equation*}
However $\exp(\mu_{c})-\delta_{0}$ is a continuous measure and the
set of those measures is an ideal, which gives
$\exp(\mu_{c})-\delta_{0}=0$ and hence $\exp(\mu)=\exp(\mu_{d})$.
From Proposition $\ref{dysp}$ we know that a sequence
$(\widehat{\mu}(n))_{n=-\infty}^{\infty}$ is almost periodic so
there exists $m\in\mathbb{N_{+}}$ such that for every
$n\in\mathbb{Z}$ the following inequality holds
\begin{equation*}
|\widehat{\mu}(n+m)-\widehat{\mu}(n)|=|m\alpha+2\pi(l_{n+m}-l_{n})|<1.
\end{equation*}
This gives: $l_{n+m}-l_{n}=s$ for some $s\in\mathbb{N}$ and every
$n\in\mathbb{Z}$. Now, pick any $n_{0}\in\mathbb{Z}$ such that
$l_{n_{0}}\neq 0$. Then for $k\in\mathbb{N}$ we have
\begin{equation*}
l_{n_{0}+mk}=l_{n_{0}+(m-1)k}+s=...=l_{n_{0}}+ks.
\end{equation*}
This leads to
\begin{equation*}
|\widehat{\mu}(n_{0}+mk)|=|k(m\alpha-2\pi s)+n_{0}\alpha-2\pi
l_{n_{0}})|
\end{equation*}
and recalling $\alpha\notin\pi\mathbb{Q}$ we obtain that a
sequence $(\widehat{\mu}(n_{0}+mk))_{k=0}^{\infty}$ is unbounded
which is the announced contradiction.
\\
Finally, if $\delta_{\alpha}$ and $\delta_{\beta}$ belongs to the
same coset of $G(M(\mathbb{T}))$ with respect to
$\mathrm{Exp}(M(\mathbb{T}))$, then
$\delta_{\alpha-\beta}\in\mathrm{Exp}(M(\mathbb{T}))$ and hence
$\alpha-\beta\in\pi\mathbb{Q}$. This proves that
$H^{1}(M(\mathbb{T}))$ is uncountable since we cannot split
$\mathbb{T}$ into countably many countable parts.
\end{proof}
\section{Main results}
This section is devoted to proving two striking results concerning
$\mathfrak{M}(M(\mathbb{T}))$.
\subsection{Fourier - Stieltjes sequences and ultrafilters}
First one says that Fourier - Stieltjes sequences of measures from
$\mathbb{T}$ `glue` some ultrafilters. The proof is based on the
following proposition.
\begin{prop}\label{nr}
Let $f:M(\mathbb{T})\mapsto l^{\infty}(\mathbb{Z})$ be the
homomorphism given by the formula
$f(\mu)=(\widehat{\mu})_{n=-\infty}^{\infty}$. Then the dual
mapping
$f^{\ast}:\mathfrak{M}(l^{\infty}(\mathbb{Z}))=\beta\mathbb{Z}\mapsto
\mathfrak{M}(M(\mathbb{T}))$ is injective if and only if $f$ has
dense range.
\end{prop}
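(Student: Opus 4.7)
The direction ``$\Leftarrow$'' is a direct instance of the second claim of Proposition \ref{na}, since $M(\mathbb{T})$ and $l^\infty(\mathbb{Z})$ are unital commutative Banach algebras and $f$ is a continuous unital homomorphism. For the converse, I will argue by contraposition: assuming $B := \overline{f(M(\mathbb{T}))}$ is a proper closed subalgebra of $l^\infty(\mathbb{Z})$, I will exhibit two distinct points $\varphi_1 \neq \varphi_2$ of $\beta\mathbb{Z}$ with $f^{\ast}(\varphi_1) = f^{\ast}(\varphi_2)$.

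The reduction is immediate: by definition $f^{\ast}(\varphi_i) = \varphi_i \circ f$, so $f^{\ast}(\varphi_1) = f^{\ast}(\varphi_2)$ if and only if the characters $\varphi_1$ and $\varphi_2$ of $l^\infty(\mathbb{Z})$ coincide on $f(M(\mathbb{T}))$, which, by continuity of characters, is the same as agreement on $B$. So the task reduces to producing two distinct points of $\beta\mathbb{Z}$ on which every element of $B$ takes the same value.

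The key observation is that $B$ is a unital, $^{\ast}$-closed subalgebra of $l^\infty(\mathbb{Z})$ with its pointwise conjugation. Unitality is clear because $f(\delta_0)$ is the constant sequence equal to $1$. For $^{\ast}$-closedness, equip $M(\mathbb{T})$ with the natural involution $\mu^{\ast}(E) := \overline{\mu(-E)}$; a short direct calculation yields $\widehat{\mu^{\ast}}(n) = \overline{\widehat{\mu}(n)}$, so that $f$ intertwines the two involutions and the closed range $B$ is stable under pointwise conjugation. Identifying $l^\infty(\mathbb{Z}) \cong C(\beta\mathbb{Z})$ by the Gelfand representation, the Stone--Weierstrass theorem forces the dichotomy: either $B = l^\infty(\mathbb{Z})$, or $B$ fails to separate the points of $\beta\mathbb{Z}$. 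Since we are assuming $B$ is proper, the second alternative holds and produces the desired pair $\varphi_1 \neq \varphi_2$ in $\beta\mathbb{Z}$ with $\varphi_1|_B = \varphi_2|_B$, contradicting injectivity of $f^{\ast}$.

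The only genuine point to verify carefully is the $^{\ast}$-homomorphism property of $f$ with respect to the correct involution on $M(\mathbb{T})$; once that is in place the proof is just an application of Stone--Weierstrass together with Gelfand duality, and I do not anticipate any further obstacle.
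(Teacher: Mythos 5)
Your proof is correct and follows essentially the same route as the paper's: both reduce the forward implication to the Stone--Weierstrass theorem applied to $f(M(\mathbb{T}))$ viewed as a unital $^{\ast}$-subalgebra of $C(\beta\mathbb{Z})\cong l^{\infty}(\mathbb{Z})$, using the involution $\mu\mapsto\widetilde{\mu}$ with $\widehat{\widetilde{\mu}}(n)=\overline{\widehat{\mu}(n)}$. The only difference is presentational: you argue by contraposition (proper closure implies failure to separate points), while the paper argues directly (injectivity implies separation of points implies density), which are the same implication.
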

\begin{proof}
One implication (if $f$ has dense range... ) is covered by
Proposition $\ref{na}$ from the introduction.
\\
Assume now that $f^{\ast}$ is injective. Since we have an
involution on $M(\mathbb{T})$ given by $\mu\mapsto\widetilde{\mu}$
where $\widetilde{\mu}(E)=\overline{\mu(-E)}$ which has the
property
\begin{equation*}
\widehat{\widetilde{\mu}}(n)=\overline{\widehat{\mu}(n)}\text{ for
every $n\in\mathbb{Z}$}
\end{equation*}
we easily see that $f(M(\mathbb{T}))$ is a $^{\ast}$ subalgebra of
$l^{\infty}(\mathbb{Z})$. Recalling that $l^{\infty}(\mathbb{Z})$
is $^{\ast}$ - isometrically isomorphic to $C(\beta\mathbb{Z})$
(see for example \cite{kan}) we may treat $f(M(\mathbb{T}))$ as a
$^{\ast}$ - subalgebra of $C(\beta\mathbb{Z})$. By the Stone -
Weierstrass theorem it is enough to verify that $f(M(\mathbb{T}))$
separates points of $\beta\mathbb{Z}$, but this is exactly our
assumption. Indeed, let us take two distinct ultrafilters
$\varphi_{1},\varphi_{2}\in\beta\mathbb{Z}$. Then, by the
assumption $f^{\ast}(\varphi_{1})(\mu_{1})\neq
f^{\ast}(\varphi_{2})(\mu_{2})$ for some $\mu_{1},\mu_{2}\in
M(\mathbb{T})$. Equivalently, we have
\begin{equation*}
\varphi_{1}((\mu_{1})_{n=-\infty}^{\infty})\neq\varphi_{2}(((\mu_{2})_{n=-\infty}^{\infty})
\end{equation*}
which is the desired assertion by the definition of the Gelfand
transform of an element in Banach algebra.
\end{proof}
Now, we are in position to prove the main theorem.
\begin{tw}
Let $f:M(\mathbb{T})\mapsto l^{\infty}(\mathbb{Z})$ be a
homomorphism as in the previous proposition. Then $f^{\ast}$ is
not injective.
\end{tw}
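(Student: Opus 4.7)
My strategy is to apply Proposition~\ref{nr} in its contrapositive form: it suffices to show that $f(M(\mathbb{T}))$ is not dense in $l^{\infty}(\mathbb{Z})$. The crucial input will be Wiener's lemma, which supplies a ``convergence feature'' common to every Fourier--Stieltjes sequence, and I then only need to exhibit a bounded sequence that violates it.

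Concretely, I would set
\[
L := \Bigl\{(a_n) \in l^{\infty}(\mathbb{Z}) : \lim_{N\to\infty} \tfrac{1}{2N+1}\sum_{|n|\leq N} |a_n|^{2}\ \text{exists}\Bigr\}.
\]
By Wiener's lemma the limit exists (and equals $\sum_{\tau}|\mu(\{\tau\})|^{2}$) for every $\mu \in M(\mathbb{T})$, so $f(M(\mathbb{T})) \subset L$. The plan is then (i) to verify that $L$ is norm-closed in $l^{\infty}(\mathbb{Z})$, and (ii) to produce a bounded sequence outside $L$. Both steps together force $f(M(\mathbb{T}))$ into a proper closed subset of $l^{\infty}(\mathbb{Z})$.

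Closedness of $L$ I would reduce to the claim that the two functionals $a \mapsto \limsup_N \tfrac{1}{2N+1}\sum_{|n|\leq N}|a_n|^{2}$ and $a \mapsto \liminf_N \tfrac{1}{2N+1}\sum_{|n|\leq N}|a_n|^{2}$ are continuous on $l^{\infty}(\mathbb{Z})$. This follows from the pointwise identity $\bigl||a_n|^{2}-|b_n|^{2}\bigr| \leq \|a-b\|_{\infty}(\|a\|_{\infty}+\|b\|_{\infty})$ combined with the $1$-Lipschitz behaviour of the Cesaro average under the sup norm; then $L$ is the coincidence set of two continuous functionals, hence closed.

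For the sequence outside $L$ I would take $a_{n}=0$ for $n<0$ and, for $n\geq 0$, set $a_{n}=1$ exactly when $n \in \bigcup_{k\geq 0}[2^{2k},2^{2k+1})$. A direct count at $N=2^{2k+1}-1$ and $N=2^{2k+2}-1$ shows that the Cesaro means of $|a_n|^{2}$ tend to $1/3$ along the first subsequence and to $1/6$ along the second, so $(a_n) \notin L$. Since $L$ is closed and contains $f(M(\mathbb{T}))$, this sequence lies at positive distance from the image of $f$, which is therefore not dense. I do not anticipate a genuinely hard step: the only thing requiring a little care is the continuity estimate, since $a \mapsto |a|^{2}$ is only locally (not globally) Lipschitz on $l^{\infty}(\mathbb{Z})$, but restricting to a sup-norm ball removes the difficulty.
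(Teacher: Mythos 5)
Your argument is correct, and it reaches the non-density of $f(M(\mathbb{T}))$ by a genuinely different and in fact more economical route than the paper. The paper also reduces to non-density via Proposition~\ref{nr}, but its witness is the Heaviside sequence ($a_n=1$ for $n\ge 0$, $a_n=0$ for $n<0$), whose quadratic Ces\`aro means \emph{do} converge; so the paper cannot argue as you do and instead splits a hypothetical approximant $\mu$ into $\mu_d+\mu_c$, uses that $(\widehat{\mu_d}(n))$ is almost periodic (Proposition~\ref{dysp}) while the Heaviside sequence stays at positive distance from the closed algebra $AP(\mathbb{Z})$ (Theorem~\ref{dom}), and then invokes Wiener's lemma to show the continuous part cannot absorb the resulting error, via a counting argument over the intervals produced by almost periodicity. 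Your proof bypasses the discrete/continuous decomposition and all of the almost periodicity machinery: you observe that Wiener's lemma places every Fourier--Stieltjes sequence in the set $L$ of bounded sequences with convergent quadratic Ces\`aro means, that $L$ is norm-closed (your local Lipschitz estimate for the averages, uniform in $N$, is exactly what is needed, and restricting to a ball is legitimate since density is a local question near the target sequence), and that the indicator of $\bigcup_k [2^{2k},2^{2k+1})$ has Ces\`aro means oscillating between $1/3$ and $1/6$, hence lies outside $L$. The trade-off is only aesthetic: the paper's proof exhibits a concrete, very simple sequence not approximable by Fourier--Stieltjes sequences at the cost of a longer estimate, while yours isolates a clean closed obstruction ($L$) at the cost of a slightly more elaborate witness. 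Both are complete proofs of the theorem.
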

\begin{proof}
By the Proposition $\ref{nr}$ it is enough to prove that
$f(M(\mathbb{T}))$ is not dense in $l^{\infty}(\mathbb{Z})$. This
fact was established in \cite{dr} using the notion of weakly
almost periodic sequence, but our proof is much more elementary.
Let us take the sequence $(a_{n})_{n=-\infty}^{\infty}$ defined as
follows: $a_{n}=1$ for $n\geq 0$ and $a_{n}=0$ for $n<0$. It is
easily seen that the sequence $(a_{n})_{n=-\infty}^{\infty}$ is
not almost periodic. Hence, by Theorem $\ref{dom}$ there exists
$c>0$ such that
\begin{equation}\label{dal}
\inf_{\mu\in
M(\mathbb{T})}\sup_{n\in\mathbb{Z}}|\widehat{\mu_{d}}(n)-a_{n}|\geq\inf_{(b_{n})\in
AP(\mathbb{Z})}\sup_{n\in\mathbb{Z}}|a_{n}-b_{n}|>c
\end{equation}
Let us assume the contrary, that $f(M(\mathbb{T}))$ is dense in
$l^{\infty}(\mathbb{Z})$ and fix $\delta>0$. Then there exists
$\mu\in M(\mathbb{T})$ such that
\begin{equation*}
\sup_{n\in\mathbb{Z}}|\widehat{\mu}(n)-a_{n}|<\delta.
\end{equation*}
Splitting $\mu$ into into its discrete $\mu_{d}$ and continuous
part $\mu_{c}$ we have for every $n\in\mathbb{Z}$
\begin{equation*}
\delta>|\widehat{\mu}(n)-a_{n}|=|\widehat{\mu_{c}}(n)+\widehat{\mu_{d}}(n)-a_{n}|\geq
||\widehat{\mu_{c}}(n)|-|\widehat{\mu_{d}}(n)-a_{n}||,
\end{equation*}
which gives for every $n\in\mathbb{Z}$
\begin{equation}\label{cc}
|\widehat{\mu_{c}}(n)|\geq |\widehat{\mu_{d}}(n)-a_{n}|-\delta.
\end{equation}
Now, by the Inequality ($\ref{dal}$), without losing generality,
there exists $n_{0}\in\mathbb{N}_{+}$ (if $n_{0}\in\mathbb{Z}_{-}$
the same argument works) such that
\begin{equation*}
|\widehat{\mu_{d}}(n_{0})-a_{n_{0}}|>\frac{c}{2}.
\end{equation*}
Now, we use the fact that the sequence
$(\widehat{\mu_{d}}(n))_{n=-\infty}^{\infty}$ is almost periodic
(Proposition $\ref{dysp}$). Hence, for fixed $\varepsilon>0$ we
can find in every interval in $\mathbb{Z}$ of the form
$(kp(\varepsilon),(k+1)p(\varepsilon)]$ an integer $m_{k}$ such
that
$|\widehat{\mu_{d}}(n_{0}+m_{k})-\widehat{\mu_{d}}(n_{0})|<\varepsilon$
for every $k\in\mathbb{N}$. Recalling that $a_{n}=1$ for $n\geq 0$
we obtain for every $k\in\mathbb{N}$
\begin{gather*}
|\widehat{\mu_{d}}(n_{0}+m_{k})-a_{n_{0}+m_{k}}|=|\widehat{\mu_{d}}(n_{0}+m_{k})-a_{n_{0}}|=\\
|\widehat{\mu_{d}}(n_{0}+m_{k})-\widehat{\mu_{d}}(n_{0})+\widehat{\mu_{d}}(n_{0})-a_{n_{0}}|\geq\\
|\widehat{\mu_{d}}(n_{0})-a_{n_{0}}|-|\widehat{\mu_{d}}(n_{0}+m_{k})-\widehat{\mu_{d}}(n_{0})|>\frac{c}{2}-\varepsilon.
\end{gather*}
This estimation together with ($\ref{cc}$) gives for every
$k\in\mathbb{N}$
\begin{equation*}
|\widehat{\mu_{c}}(n_{0}+m_{k})|>\frac{c}{2}-\varepsilon-\delta.
\end{equation*}
On the other hand, from Wiener's lemma (by passing to a
subsequence) we get
\begin{equation*}
\lim_{k\rightarrow\infty}\frac{1}{2kp(\varepsilon)+1}\sum_{n=-kp(\varepsilon)+n_{0}}^{kp(\varepsilon)+n_{0}}|\widehat{\mu_{c}}(n)|^{2}=0
\end{equation*}
But in every interval
$[-kp(\varepsilon)-n_{0},kp(\varepsilon)+n_{0}]$ there are $k$
integers for which the summed expression is greater then
$(\frac{c}{2}-\varepsilon-\delta)^{2}$. This gives
\begin{gather*}
\frac{1}{2kp(\varepsilon)+2n_{0}+1}\sum_{n=-kp(\varepsilon)-n_{0}}^{kp(\varepsilon)+n_{0}}|\widehat{\mu_{c}}(n)|^{2}\geq\\
\frac{1}{2kp(\varepsilon)+2n_{0}+1}k\left(\frac{c}{2}-\varepsilon-\delta\right)^{2}\geq\frac{(\frac{c}{2}-\varepsilon-\delta)^{2}}{3p(\varepsilon)+2n_{0}},
\end{gather*}
which is the desired contradiction.
\end{proof}
\subsection{Non-separability}
Now, we move on to the second main theorem of this paper, namely
that the maximal ideal space of the measure algebra on the circle
group is not separable.
\\
We shall need the following simple arithmetic lemma which follows
easily from the fact that every element in a lacunary sequence
with ratio at least $3$ is greater than twice of sum of all
previous elements in the sequence.
\begin{lem}\label{ari}
Let $(n_{k})_{k=1}^{\infty}$ be sequence of positive integers such
that $\frac{n_{k+1}}{n_{k}}\geq 3$ for every $k\in\mathbb{N}$ and
for any set $A\subset\{n_{k}:k\in\mathbb{N}\}$, let us write
$\widetilde{A}$ for the set defined as follows
\begin{equation*}
\widetilde{A}=\left\{\sum_{l=1}^{n}\varepsilon_{l}a_{l}:\varepsilon_{l}\in\{-1,0,1\},a_{l}\in
A, n\in\mathbb{N}\right\}.
\end{equation*}
With these notions, if $A\cap B$ is finite, then
$\widetilde{A}\cap\widetilde{B}$ is finite.
\end{lem}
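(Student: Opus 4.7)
The plan is to prove a uniqueness-of-representation property for elements of $\widetilde{A}$ and then deduce the lemma by observing that the unique representation of any element of $\widetilde{A}\cap\widetilde{B}$ can only involve terms from $A\cap B$. I read the definition so that the $a_l$'s appearing in a single sum are distinct (equivalently, $\widetilde{A}$ is the set of signed sums $\sum_{a\in F}\varepsilon_a a$ over finite subsets $F\subset A$ with $\varepsilon_a\in\{-1,0,1\}$); otherwise the conclusion would already fail because any nonzero common element would contribute infinitely many of its integer multiples to the intersection.

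The first step is to record the arithmetic consequence of lacunarity singled out in the statement: since $n_{k+1}\geq 3 n_k$, a geometric-series estimate gives $n_1+\cdots+n_k<\tfrac{3}{2}n_k\leq\tfrac{1}{2}n_{k+1}$, so in particular every element of the sequence exceeds twice the sum of all earlier ones.

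Next I would prove uniqueness: if $\sum_l \varepsilon_l n_{k_l}=\sum_l \varepsilon'_l n_{k_l}$ with the indices $k_l$ distinct and $\varepsilon_l,\varepsilon'_l\in\{-1,0,1\}$, then $\varepsilon_l=\varepsilon'_l$ for all $l$. Assuming not, take the largest index $l_0$ with $\varepsilon_{l_0}\neq\varepsilon'_{l_0}$; rearranging gives $(\varepsilon_{l_0}-\varepsilon'_{l_0})n_{k_{l_0}}=\sum_{l<l_0}(\varepsilon'_l-\varepsilon_l)n_{k_l}$. The left-hand side has absolute value at least $n_{k_{l_0}}$, while the right-hand side is bounded in absolute value by $2\sum_{l<l_0} n_{k_l}<n_{k_{l_0}}$ by the previous step, a contradiction.

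With uniqueness in hand, the lemma is essentially immediate: any $x\in\widetilde{A}\cap\widetilde{B}$ admits two representations, one using elements of $A$ and one using elements of $B$, and uniqueness forces these representations to agree term-by-term. Hence every element of the lacunary sequence appearing with nonzero coefficient belongs to both $A$ and $B$, which shows $\widetilde{A}\cap\widetilde{B}\subseteq\widetilde{A\cap B}$. Since $|\widetilde{A\cap B}|\leq 3^{|A\cap B|}$, finiteness of $A\cap B$ yields finiteness of $\widetilde{A}\cap\widetilde{B}$. The main point to be careful about is precisely the interpretation of $\widetilde{A}$ and the clean bookkeeping in the uniqueness argument; the arithmetic itself is the easy lacunary telescoping already flagged in the statement.
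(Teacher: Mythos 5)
Your proof is correct and follows exactly the route the paper intends: the paper gives no written proof beyond the remark that each term of a lacunary sequence with ratio at least $3$ exceeds twice the sum of all earlier terms, and your uniqueness-of-representation argument (leading to $\widetilde{A}\cap\widetilde{B}\subseteq\widetilde{A\cap B}$) is precisely the standard way to turn that remark into the stated finiteness. Your preliminary observation that the $a_{l}$'s in a single sum must be taken distinct is also the right reading, since $\widetilde{A}$ is meant to be the Fourier support of a Riesz product.
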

We will also make use of a well-known observation due to
Sierpiński.
\begin{prop}\label{sier}
There exists uncountably many infinite subsets of positive
integers such that intersection of each two is finite.
\end{prop}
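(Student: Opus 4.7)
The plan is to reproduce Sierpiński's classical construction of an almost disjoint family of size continuum. The key idea is to replace $\mathbb{N}_{+}$ by a countable set that carries a combinatorial structure in which one can encode a continuum of ``branches'' with pairwise finite overlap; once such a family is produced on the auxiliary set, any bijection with $\mathbb{N}_{+}$ transports it to the required family.

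The cleanest realization is to work with the infinite binary tree. First I would fix a bijection $\phi:\{0,1\}^{<\omega}\to\mathbb{N}_{+}$, where $\{0,1\}^{<\omega}$ denotes the countable set of finite binary strings. For each $x\in\{0,1\}^{\mathbb{N}}$ (of which there are continuum many) I would define
\begin{equation*}
A_{x}=\{\phi(x\!\upharpoonright\! n):n\in\mathbb{N}\}\subset\mathbb{N}_{+},
\end{equation*}
where $x\!\upharpoonright\! n$ is the prefix of $x$ of length $n$. Since the prefixes of a fixed $x$ are pairwise distinct, each $A_{x}$ is infinite. (Alternatively, one could identify $\mathbb{N}_{+}$ with $\mathbb{Q}$ via a bijection and, for every $r\in\mathbb{R}$, take $A_{r}$ to be the image of any injective sequence of rationals converging to $r$; this is Sierpiński's original presentation.)

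Next I would check the almost-disjointness. If $x\neq y$ in $\{0,1\}^{\mathbb{N}}$, let $k$ be the first position at which they differ. Then for $n\geq k$ the prefixes $x\!\upharpoonright\! n$ and $y\!\upharpoonright\! n$ are distinct, so any element of $A_{x}\cap A_{y}$ must come from some prefix of length $<k$; since there are only $k$ such prefixes, the intersection has at most $k$ elements, in particular is finite. (In the rationals version: a rational in $A_{r}\cap A_{s}$ would have to sit on two convergent sequences with distinct limits, so it can appear only finitely often.)

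There is no real obstacle here; the only choice is which encoding to use, and the binary-tree version makes the finiteness of $A_{x}\cap A_{y}$ completely transparent, so I would go with that.
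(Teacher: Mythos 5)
Your construction is correct: the binary-tree argument gives a continuum-sized almost disjoint family of infinite subsets of $\mathbb{N}_{+}$, and the finiteness of $A_{x}\cap A_{y}$ follows exactly as you say from the injectivity of the coding bijection (an element of the intersection must be the image of a \emph{common} prefix of $x$ and $y$, and there are only finitely many of those). The paper itself offers no proof, citing the statement as a well-known observation of Sierpi\'nski, so there is nothing to diverge from; both of the constructions you sketch (binary tree, or injective rational sequences converging to distinct reals) are the standard ones and either suffices.
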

Now, we recall a few facts on Riesz products, that is continuous
probabilistic measure on the circle of group of the following form
\begin{equation*}
R(a_{k},n_{k})=\prod_{k=1}^{\infty}(1+a_{k}\cos (n_{k}t)),
\end{equation*}
where this infinite product is meant as weak$^{\ast}$ limit of
finite products. From the construction of Riesz products we have
(for simplicity we write $\mu=R(a_{k},n_{k})$ and
$A=\{n_{k}:k\in\mathbb{N}$\})
\begin{equation*}
S(\mu):=\{n\in\mathbb{Z}:\widehat{\mu}(n)\neq 0\}=\widetilde{A}.
\end{equation*}
For a sequence of natural numbers $(n_{k})_{k=1}^{\infty}$ we
assume $\frac{n_{k+1}}{n_{k}}\geq 3$ for every $k\in\mathbb{N}$
and from $(a_{k})_{k=1}^{\infty}$ we demand $-1<a_{k}\leq 1$ for
$k\in\mathbb{N}$. We will use the following strong result on Riesz
products (for a proof consult \cite{bm} or \cite{grmc}).
\begin{tw}[Brown,Moran]\label{brm}
If $(a_{k})_{k=1}^{\infty}$ is a sequence of real numbers
satisfying $-1<a_{k}\leq 1$ for $k\in\mathbb{N}$ with the property
\begin{equation*}
\forall_{n\in\mathbb{N}}\sum_{k=1}^{\infty}|a_{k}|^{n}=\infty
\end{equation*}
then the Riesz product $R(a_{k},n_{k})$ has all convolution powers
mutually singular.
\end{tw}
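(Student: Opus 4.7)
The aim is to prove that the Riesz product $\mu:=R(a_{k},n_{k})$ has $\mu^{\ast m}$ and $\mu^{\ast n}$ mutually singular for every $m\neq n$ in $\mathbb{N}$. The plan is to exploit the product structure hidden in Riesz products and combine it with a Kakutani-type dichotomy.

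First, I would record the Fourier content of $\mu$. The lacunarity hypothesis $n_{k+1}/n_{k}\geq 3$ guarantees that every element of the set $\widetilde{A}=\{\sum\varepsilon_{k}n_{k}:\varepsilon_{k}\in\{-1,0,+1\}\}$ of Lemma~\ref{ari} admits a \emph{unique} such representation, and the Fourier coefficients of the partial products defining $\mu$ factorise across coordinates: $\widehat{\mu}(\sum\varepsilon_{k}n_{k})=\prod_{\varepsilon_{k}\neq 0}(a_{k}/2)$. In particular $\widehat{\mu^{\ast m}}(n_{k})=(a_{k}/2)^{m}$ for every $k$, which is the only quantitative input the argument will need.

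Second, I would realise $\mu$ as the pushforward of an infinite product probability measure coming from an auxiliary compact abelian group (informally from a countable product of three-point spaces with the marginal $(a_{k}/4,1-a_{k}/2,a_{k}/4)$ on each coordinate, formally from the Bohr compactification of the subgroup of $\mathbb{Z}$ generated by the $n_{k}$). Under this realisation both $\mu^{\ast m}$ and $\mu^{\ast n}$ become product measures whose per-coordinate marginals are obtained from the factors $1+a_{k}\cos(n_{k}t)$ by the corresponding convolution power. Kakutani's dichotomy then immediately reduces the question to whether the Hellinger integral $\prod_{k}H_{k}$ is zero or strictly positive: equivalence when positive, mutual singularity when zero.

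Third, a direct computation of each $H_{k}$ using the explicit marginals yields an expansion of the form $H_{k}=1-c_{m,n}|a_{k}|^{N(m,n)}+O(|a_{k}|^{N(m,n)+1})$ for some integer $N(m,n)\geq 1$ and a positive constant $c_{m,n}$ depending only on $m$ and $n$. Convergence of $\prod H_{k}$ to a positive limit would therefore force $\sum|a_{k}|^{N(m,n)}<\infty$, contradicting the standing hypothesis $\sum|a_{k}|^{N}=\infty$ for every $N$. Hence $\prod H_{k}=0$ and $\mu^{\ast m}\perp\mu^{\ast n}$.

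The main obstacle is making the second step rigorous: Riesz products are \emph{not} literally product measures on $\mathbb{T}$, and the product-measure realisation lives on a larger group of which $\mathbb{T}$ is a quotient, so one has to check that mutual singularity survives the projection back to $\mathbb{T}$ and that the relevant marginals for $\mu^{\ast m}$ really are of product type after pushforward. Once this structural reduction is in place, the Hellinger expansion is essentially mechanical. An alternative route, closer to the original Brown--Moran argument, bypasses the abstract dichotomy and instead constructs trigonometric polynomials $p_{N}$ on $\mathbb{T}$ whose integrals against $\mu^{\ast m}$ tend to $1$ while those against $\mu^{\ast n}$ tend to $0$, but the selection of coefficients for the $p_{N}$ is still driven by the same per-coordinate Hellinger-type computation.
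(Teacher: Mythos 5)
The paper offers no proof of this statement: Theorem~\ref{brm} is imported verbatim from Brown--Moran, with the reader referred to \cite{bm} and \cite{grmc}, so your sketch can only be judged on its own merits. Your quantitative input is correct: dissociateness of $(n_{k})$ (ratio $\geq 3$) gives $\widehat{\mu^{\ast m}}(\sum\varepsilon_{k}n_{k})=\prod(a_{k}/2)^{m|\varepsilon_{k}|}$, so $\mu^{\ast m}$ is itself the Riesz product $R(a_{k}^{m}/2^{m-1},n_{k})$, and the hypothesis $\sum|a_{k}|^{N}=\infty$ for every $N$ is exactly what makes the relevant per-coordinate products degenerate. The genuine gap is in your second step, and you have named it yourself without closing it: mutual singularity does \emph{not} survive pushforward along a quotient map. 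If $\pi:G\to\mathbb{T}$ is the projection from the auxiliary product group and $\tilde{\mu}\perp\tilde{\nu}$ on $G$, nothing whatever follows about $\pi_{\ast}\tilde{\mu}$ versus $\pi_{\ast}\tilde{\nu}$ (the implication runs in the opposite direction). So Kakutani's dichotomy on the product group, however cleanly it yields $\prod_{k}H_{k}=0$ there, proves nothing about the measures on $\mathbb{T}$; this transfer is precisely where the entire content of the theorem lives and cannot be relegated to a "check".

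The actual argument works directly on $\mathbb{T}$: dissociateness gives quasi-independence of the functions $\cos(n_{k}t)$ under any of the measures $\mu^{\ast m}$ (expectations of products over distinct indices factor, because the only frequency in $\widetilde{A}$ representable in two ways is the trivial one), and one runs a second-moment argument with normalized sums $f_{N}=A_{N}^{-1}\sum_{k\leq N}c_{k}\bigl(\cos(n_{k}t)-(a_{k}/2)^{m}\bigr)$ whose variances stay bounded under both $\mu^{\ast m}$ and $\mu^{\ast n}$ while the means separate; Chebyshev then produces disjoint Borel sets carrying the two measures. Your "alternative route" points at this, but as stated it is not a valid singularity criterion either: $\|p_{N}\|_{\infty}\leq 1$ with $\int p_{N}\,d\mu\to 1$ and $\int p_{N}\,d\nu\to 0$ only bounds the total variation distance below by $1$, not $2$ (take $\mu=\delta_{0}$, $\nu=\frac{1}{2}(\delta_{0}+\delta_{1})$ and $p_{N}$ with $p_{N}(0)\to 1$, $p_{N}(1)\to -1$); one needs $0\leq p_{N}\leq 1$, or the Chebyshev localization above. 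In sum, the proposal identifies the correct mechanism and the correct role of the divergence hypothesis, but the route it actually proposes breaks at its central reduction and so does not constitute a proof.
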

It is an elementary result from the general theory of Banach
algebras (see for example \cite{grmc}), that for Riesz products
satisfying the assumptions of Theorem $\ref{brm}$ we have
\begin{equation*}
\{z\in\mathbb{C}:|z|=1\}\subset\sigma(R(a_{k},n_{k})).
\end{equation*}
In fact, a much stronger result is true (see \cite{bbm}).
\begin{tw}[Brown,Bailey,Moran]\label{bbmo}
If $\mu\in M(\mathbb{T})$ is a hermitian measure with all
convolution powers mutually singular then
\begin{equation*}
\sigma(\mu)=\{z\in\mathbb{C}:|z|\leq r(\mu)\}.
\end{equation*}
\end{tw}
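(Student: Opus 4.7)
The plan is to prove $\sigma(\mu) = \overline{D(0, r(\mu))}$, writing $\overline{D(0, r)}$ for the closed disk of radius $r$ in $\mathbb{C}$. The inclusion $\sigma(\mu) \subseteq \overline{D(0, r(\mu))}$ is immediate from the definition of the spectral radius, so all the content is in the reverse inclusion.

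The first step is to exploit mutual singularity to identify the closed unital subalgebra $B \subseteq M(\mathbb{T})$ generated by $\mu$ as a Beurling algebra. Since the convolution powers $\mu^0 = \delta_0, \mu, \mu^2, \ldots$ are pairwise mutually singular, the total variation is additive on their linear combinations: $\|\sum_{n=0}^N c_n \mu^n\| = \sum_{n=0}^N |c_n|\,\|\mu^n\|$ for every choice of complex scalars. This identity provides an isometric unital algebra isomorphism between the weighted convolution algebra $l^1_w(\mathbb{Z}_+)$, with weight $w_n = \|\mu^n\|$, and $B$, sending the canonical generator of $l^1_w$ to $\mu$. A standard Gelfand-theoretic computation in Beurling algebras then identifies the spectrum of this generator as the closed disk of radius $\lim_n w_n^{1/n} = r(\mu)$, so $\sigma_B(\mu) = \overline{D(0, r(\mu))}$.

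The second step invokes the classical fact that, when $B$ is the closed unital subalgebra of a commutative unital Banach algebra $A$ generated by a single element $a$, the spectrum $\sigma_B(a)$ equals the polynomial convex hull of $\sigma_A(a)$. Applied here, the polynomial convex hull of $\sigma_{M(\mathbb{T})}(\mu)$ equals $\overline{D(0, r(\mu))}$. A short planar-topology argument then concludes the proof: a compact set $K \subseteq \overline{D(0, r(\mu))}$ whose polynomial hull is the whole closed disk must contain the boundary circle $\{|z| = r(\mu)\}$, because any missing point on the circle would open a neighborhood joining the interior and the exterior of the disk, enlarging the unbounded component of $\mathbb{C} \setminus K$ past the disk and contradicting the computed hull. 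Once the circle lies in $K$, the interior of the disk is separated from the exterior inside $\mathbb{C} \setminus K$; with only the unbounded component admissible, the interior must also be contained in $K$. Combined with $\sigma_{M(\mathbb{T})}(\mu) \subseteq \overline{D(0, r(\mu))}$, this yields the desired equality.

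Main obstacle: the entire argument rests on the isometric identification $B \cong l^1_w(\mathbb{Z}_+)$, which hinges on the norm identity $\|\sum c_n \mu^n\| = \sum |c_n|\,\|\mu^n\|$. This identity follows from the pairwise mutual singularity of the convolution powers, but verifying it cleanly (and in particular handling the $\mu^0 = \delta_0$ term) is the main technical point. The hermiticity assumption does not appear to play an essential role in the spectral-shape calculation itself; it enters through the interaction with the involution on $M(\mathbb{T})$ and via applications such as Riesz products where the measure in question is automatically positive and continuous.
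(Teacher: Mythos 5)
A preliminary remark on scope: the paper offers no proof of this statement; it is quoted as a known theorem from the reference [BBM], so your attempt has to be measured against the actual content of that result rather than against an argument in the text. Your steps up to the hull computation are sound: mutual singularity of the powers does give $\left\|\sum_{n}c_{n}\mu^{n}\right\|=\sum_{n}|c_{n}|\,\|\mu^{n}\|$, the closed unital subalgebra $B$ generated by $\mu$ is isometrically the Beurling algebra $\ell^{1}_{w}(\mathbb{Z}_{+})$ with $w_{n}=\|\mu^{n}\|$, one gets $\sigma_{B}(\mu)=\{z:|z|\leq r(\mu)\}$, and $\sigma_{B}(\mu)$ is the polynomial convex hull of $\sigma_{M(\mathbb{T})}(\mu)$. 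The breakdown is in the final planar-topology step. From ``the polynomial hull of $K$ is the closed disk'' you may only conclude that $K$ contains the boundary circle: if the open disk is missed by $K$, it lies in a \emph{bounded} component of $\mathbb{C}\setminus K$, which is exactly what the hull fills in, so there is no contradiction -- the circle $\{|z|=r\}$ by itself has polynomial hull equal to the closed disk. Your phrase ``with only the unbounded component admissible'' tacitly assumes $\sigma_{M(\mathbb{T})}(\mu)$ is polynomially convex, which is precisely the assertion to be proved and is the entire difficulty of the theorem. What your argument actually establishes is $\{z:|z|=r(\mu)\}\subseteq\sigma(\mu)$, i.e.\ the ``elementary result'' the paper records in the sentence immediately preceding the statement of the theorem.

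That the gap is real, and that hermiticity cannot be discarded as you suggest, is shown by $\mu=\delta_{\alpha}$ with $\alpha\notin\pi\mathbb{Q}$: its convolution powers $\delta_{n\alpha}$ are pairwise mutually singular, the closed unital subalgebra it generates is isometric to $\ell^{1}(\mathbb{Z}_{+})$, so $\sigma_{B}(\delta_{\alpha})$ is the closed unit disk; yet $\delta_{\alpha}$ is invertible with $\|\delta_{\alpha}\|=\|\delta_{-\alpha}\|=1$, hence $\sigma_{M(\mathbb{T})}(\delta_{\alpha})$ is the unit circle and not the disk. Every step of your argument through the hull computation applies verbatim to this non-hermitian measure, so the concluding step must fail. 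The genuine content of the Bailey--Brown--Moran theorem is the construction, for each $z$ with $|z|<r(\mu)$, of a multiplicative linear functional on \emph{all} of $M(\mathbb{T})$ (equivalently, a generalized character on the structure semigroup) taking the value $z$ at $\mu$; it is there that both the independence of the powers and the hermiticity enter essentially, and none of that is captured by the singly generated subalgebra.
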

The theorem of Zafran will also be appropriate (check \cite{z}).
\begin{tw}[Zafran]
Let $\mathscr{C}=\{\mu\in
M_{0}(\mathbb{T}):\sigma(\mu)=\widehat{\mu}(\mathbb{Z})\cup\{0\}$.
\begin{enumerate}
    \item If
    $\varphi\in\mathfrak{M}(M_{0}(\mathbb{T}))\setminus\mathbb{Z}$,
    then $\varphi(\mu)=0$ for all $\mu\in\mathscr{C}$.
    \item $\mathscr{C}$ is closed ideal in $M_{0}(\mathbb{T})$.
    \item $\mathfrak{M}(\mathscr{C})=\mathbb{Z}$.
\end{enumerate}
\end{tw}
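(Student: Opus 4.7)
The plan is to prove the three parts in the order (2), (1), (3), using the ideal structure of (2) as leverage for the bootstrap in (1), and then obtaining (3) as a standard corollary. Throughout, write $\psi_n \in \mathfrak{M}(M_0(\mathbb{T}))$ for the character $\mu \mapsto \widehat{\mu}(n)$, so the ``$\mathbb{Z}$'' in the statement denotes $\{\psi_n : n \in \mathbb{Z}\}$. The inclusion $\widehat{\mu}(\mathbb{Z}) \cup \{0\} \subseteq \sigma(\mu)$ is automatic for every $\mu \in M_0(\mathbb{T})$, so the defining condition of $\mathscr{C}$ is really a statement about the remaining characters.

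Part (2), closedness, is routine via characters: if $\mu_k \to \mu$ in norm with $\mu_k \in \mathscr{C}$, then for any $\varphi \in \mathfrak{M}(M_0(\mathbb{T}))$ we have $\varphi(\mu_k) = \widehat{\mu_k}(n_k)$ or $\varphi(\mu_k) = 0$; passing to a subsequence of the $n_k$, the bounded case yields $\varphi(\mu) = \widehat{\mu}(n)$ for a limit index $n$, while the unbounded case yields $\varphi(\mu) = 0$ by combining the uniform Fourier convergence $\widehat{\mu_k} \to \widehat{\mu}$ with the Rajchman decay $\widehat{\mu}(n) \to 0$. The ideal property is the substantive part: given $\mu \in \mathscr{C}$, $\nu \in M_0(\mathbb{T})$, and $\varphi \in \mathfrak{M}(M_0(\mathbb{T}))$ with $\varphi(\mu) = \widehat{\mu}(n_0) \neq 0$, the idea is to bootstrap off the identities $\varphi(\mu)^k \varphi(\nu) = \varphi(\mu^{\ast k} \ast \nu) \in \{\widehat{\mu}(m)^k \widehat{\nu}(m) : m \in \mathbb{Z}\} \cup \{0\}$ for all $k$. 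Taking $k$-th roots of moduli and using Rajchman decay of $\widehat{\mu}$ to rule out escaping indices, then using the rigidity of $k$-th powers, one forces $|\widehat{\mu}(m_k)| = |\widehat{\mu}(n_0)|$ eventually, and with a judicious choice of $\mu$ (whose $|\widehat{\mu}|$ attains its maximum uniquely at $n_0$) this pins $m_k = n_0$, giving $\varphi(\nu) = \widehat{\nu}(n_0)$.

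Part (1) is the preceding bootstrap read contrapositively: if $\varphi \notin \mathbb{Z}$ and yet $\varphi(\mu_0) \neq 0$ for some $\mu_0 \in \mathscr{C}$, then the conclusion $\varphi(\nu) = \widehat{\nu}(n_0)$ for every $\nu$ says $\varphi = \psi_{n_0}$, contradicting $\varphi \notin \mathbb{Z}$. Part (3) then follows from the standard correspondence between characters of a closed ideal and non-vanishing characters of the ambient algebra: any $\psi \in \mathfrak{M}(\mathscr{C})$ extends via $\widetilde{\psi}(\nu) = \psi(\mu \ast \nu)/\psi(\mu)$, for any $\mu \in \mathscr{C}$ with $\psi(\mu) \neq 0$, to a character $\widetilde{\psi} \in \mathfrak{M}(M_0(\mathbb{T}))$ that does not vanish on $\mathscr{C}$; by (1) it must coincide with some $\psi_n$, and distinct $n$'s clearly restrict to distinct characters of $\mathscr{C}$ since $\mathscr{C}$ contains measures with prescribed nonvanishing Fourier coefficient at any given integer.

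The principal obstacle is the interlocking of (1) and (2): the clean statement of the ideal property in (2) already encodes the substantive content of (1), so in practice one is forced to prove them simultaneously, most naturally by induction on convolution powers. A secondary but essential issue is verifying that $\mathscr{C}$ is rich enough to support the bootstrap -- in particular that it contains measures whose $|\widehat{\mu}|$ attains its supremum at a unique integer, and measures with $\widehat{\mu}(n) \neq 0$ for each prescribed $n$ -- which is a concrete but nontrivial matter best dispatched by exhibiting specific natural-spectrum Rajchman measures (for instance, suitable elements of $L^{1}(\mathbb{T})$ with controlled Fourier decay).
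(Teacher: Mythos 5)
First, a point of comparison: the paper does not prove this statement at all --- it is quoted as Zafran's theorem and imported from \cite{z} as a black box to be used in the non-separability argument. So there is no internal proof to measure yours against, and your sketch has to stand on its own. It does not, because of a genuine circularity at the heart of the ``bootstrap''. To assert that $\varphi(\mu^{\ast k}\ast\nu)\in\{\widehat{\mu}(m)^{k}\widehat{\nu}(m):m\in\mathbb{Z}\}\cup\{0\}$ you must already know that $\mu^{\ast k}\ast\nu$ has natural spectrum, i.e.\ that $\mu^{\ast k}\ast\nu\in\mathscr{C}$ --- which is precisely the ideal property you are trying to establish. The spectral mapping theorem only gives you $\varphi(\mu)^{k}\in\{\widehat{\mu}(m)^{k}:m\in\mathbb{Z}\}\cup\{0\}$, which carries no information beyond $\varphi(\mu)\in\sigma(\mu)$; the extra factor $\varphi(\nu)$ is exactly where the unproved hypothesis enters. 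You acknowledge that (1) and (2) interlock and gesture at ``induction on convolution powers'', but no such induction closes: knowing $\mu^{\ast j}\ast\nu\in\mathscr{C}$ for $j<k$ gives no control over $\sigma(\mu^{\ast k}\ast\nu)$, since natural spectrum is not preserved by convolution in general --- that failure is the Wiener--Pitt phenomenon, and it is the entire difficulty of the subject. A second, independent defect is the ``judicious choice of $\mu$ whose $|\widehat{\mu}|$ attains its maximum uniquely at $n_{0}$'': in the ideal step $\mu\in\mathscr{C}$ is \emph{given}, and in part (1) the hypothesis quantifies over all of $\mathscr{C}$, so you are not free to replace $\mu$ by a more convenient measure without changing what is being proved.

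The parts of your sketch that do work are the peripheral ones: the norm-closedness argument (splitting into bounded and unbounded index sequences and using uniform convergence of $\widehat{\mu_{k}}$ together with $\widehat{\mu}(n)\to 0$) is correct, as is the reduction of (3) to (1) and (2) via the standard correspondence between characters of a closed ideal and non-vanishing characters of the ambient algebra, with $L^{1}(\mathbb{T})\subset\mathscr{C}$ supplying the needed separating elements. But the substantive content --- that a character outside $\mathbb{Z}$ must annihilate every natural-spectrum Rajchman measure --- is not reachable by the root-extraction trick. Zafran's actual argument is of a different nature: it exploits the countability of $\sigma(\mu)$ for $\mu\in\mathscr{C}$ together with the holomorphic functional calculus to manufacture idempotents, which are then identified via Cohen's idempotent theorem (for $M_{0}(\mathbb{T})$ these are trigonometric polynomials), and derives the contradiction from there. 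If you want to present a proof of this theorem rather than cite it, that is the machinery you would need to develop.
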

We are ready now to prove the announced theorem.
\begin{tw}
$\mathfrak{M}(M(\mathbb{T}))$ contains uncountably many open
disjoint open subsets. In particular,
$\mathfrak{M}(M(\mathbb{T}))$ is not separable.
\end{tw}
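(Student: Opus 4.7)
The plan is to combine Sierpi\'nski's Proposition \ref{sier}, the Brown--Moran and Brown--Bailey--Moran theorems (\ref{brm} and \ref{bbmo}), Lemma \ref{ari}, and Zafran's theorem. Fix once and for all a sequence $(a_k)_{k=1}^{\infty}$ of real numbers with $-1<a_k\leq 1$, $a_k\to 0$, and $\sum_{k=1}^{\infty}|a_k|^n=\infty$ for every $n\in\mathbb{N}$ (e.g.\ $a_k=1/\log(k+2)$). By Proposition \ref{sier}, pick an uncountable family $\{B_\alpha\}_{\alpha\in I}$ of infinite subsets of $\mathbb{N}_{+}$ with $B_\alpha\cap B_\beta$ finite whenever $\alpha\neq\beta$. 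Inside each $B_\alpha$ extract a lacunary sequence $(n^{\alpha}_k)_{k=1}^{\infty}$ of ratio at least $3$ and form the Riesz product $\mu_\alpha=R(a_k,n^{\alpha}_k)$. Then $\mu_\alpha$ is a hermitian continuous probability measure, and the lacunarity of $(n^{\alpha}_k)$ together with $a_k\to 0$ places $\mu_\alpha\in M_{0}(\mathbb{T})$. By Theorems \ref{brm} and \ref{bbmo}, $\sigma(\mu_\alpha)=\{z\in\mathbb{C}:|z|\leq 1\}$.

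The decisive observation is that for $\alpha\neq\beta$ the Fourier--Stieltjes coefficients of $\mu_\alpha\ast\mu_\beta$ are supported on $\widetilde{B_\alpha}\cap\widetilde{B_\beta}$, which is finite by Lemma \ref{ari}. Hence $\mu_\alpha\ast\mu_\beta$ is a trigonometric polynomial in $L^{1}(\mathbb{T})\subset M_{0}(\mathbb{T})$; Wiener's identification $\mathfrak{M}(L^{1}(\mathbb{T}))=\mathbb{Z}$ then yields $\sigma(\mu_\alpha\ast\mu_\beta)=\widehat{\mu_\alpha\ast\mu_\beta}(\mathbb{Z})\cup\{0\}$, so $\mu_\alpha\ast\mu_\beta\in\mathscr{C}$.

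Take $\lambda=i$; since $\lambda\in\sigma(\mu_\alpha)$, there is $\varphi_\alpha\in\mathfrak{M}(M(\mathbb{T}))$ with $\varphi_\alpha(\mu_\alpha)=i$, and
\[
V_\alpha=\bigl\{\varphi\in\mathfrak{M}(M(\mathbb{T})):|\varphi(\mu_\alpha)-i|<1/2\bigr\}
\]
is a nonempty weak-$\ast$ open neighbourhood of $\varphi_\alpha$. I claim $V_\alpha\cap V_\beta=\emptyset$ for $\alpha\neq\beta$. Indeed, assume $\varphi$ lies in the intersection. The disk $B(i,1/2)$ misses $\mathbb{R}$, so both $\varphi(\mu_\alpha)$ and $\varphi(\mu_\beta)$ are nonreal and in particular nonzero; hence $\varphi|_{M_{0}(\mathbb{T})}$ is a nonzero element of $\mathfrak{M}(M_{0}(\mathbb{T}))$. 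If $\varphi|_{M_{0}}\in\mathbb{Z}$, say $\varphi|_{M_{0}}=\widehat{\,\cdot\,}(n)$, then $\varphi(\mu_\alpha)=\widehat{\mu_\alpha}(n)\in\mathbb{R}$ (as $\mu_\alpha$ is hermitian), contradicting $\varphi(\mu_\alpha)\in B(i,1/2)$. Therefore $\varphi|_{M_{0}}\notin\mathbb{Z}$, and Zafran's theorem applied to $\mu_\alpha\ast\mu_\beta\in\mathscr{C}$ gives $\varphi(\mu_\alpha)\varphi(\mu_\beta)=\varphi(\mu_\alpha\ast\mu_\beta)=0$, contradicting nonvanishing of both factors. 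An uncountable family of pairwise disjoint nonempty open sets rules out any countable dense subset, so $\mathfrak{M}(M(\mathbb{T}))$ fails to be separable.

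The main obstacle is verifying the two nonobvious inputs that feed the contradiction: that the lacunary Riesz product with $a_k\to 0$ really lands in $M_{0}(\mathbb{T})$ (one uses the unique lacunary representation of indices in $\widetilde{B_\alpha}$ to bound $|\widehat{\mu_\alpha}(m)|$ by a factor of the form $|a_{k_r}/2|$ with $k_r\to\infty$ as $|m|\to\infty$), and that $\mu_\alpha\ast\mu_\beta$ actually enters Zafran's ideal $\mathscr{C}$, which in turn rests on Lemma \ref{ari} together with $\mathfrak{M}(L^{1}(\mathbb{T}))=\mathbb{Z}$. Once these are in place, the case split on $\varphi|_{M_{0}}$ combined with the choice of $\lambda$ off the real axis closes the argument cleanly.
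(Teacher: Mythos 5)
Your overall strategy is the paper's: an almost disjoint family, Riesz products whose pairwise convolutions fall into Zafran's ideal $\mathscr{C}$, spectra equal to a full disk via Theorems \ref{brm} and \ref{bbmo}, and pairwise disjoint open sets obtained by pulling back a neighbourhood of a nonreal spectral value. Your endgame (choosing $\lambda=i$, excluding $\varphi|_{M_{0}}\in\mathbb{Z}$ because $\widehat{\mu_\alpha}$ is real-valued, then invoking Zafran) is correct, and your variant with $a_k\to 0$, which places $\mu_\alpha$ itself in $M_{0}(\mathbb{T})$, is a legitimate way to make the case analysis on $\varphi|_{M_{0}}$ self-contained.

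There is, however, a genuine gap in the setup. Lemma \ref{ari} requires $A$ and $B$ to be subsets of \emph{one fixed} sequence $(n_k)$ with $n_{k+1}/n_k\geq 3$; only then does the near-uniqueness of representations $\sum_l\varepsilon_l n_{k_l}$ force $\widetilde{A}\cap\widetilde{B}$ to be finite whenever $A\cap B$ is. You instead take arbitrary almost disjoint sets $B_\alpha\subset\mathbb{N}_{+}$ and extract a separate lacunary sequence inside each; two such sequences need not lie in a common ratio-$3$ lacunary sequence, and then the conclusion you want can simply fail. For example, $\{3^k\}$ and $\{2\cdot 3^k\}$ are disjoint and each lacunary of ratio $3$, yet $2\cdot 3^k=3^{k+1}-3^k$ belongs to both $\widetilde{\{3^k\}}$ and $\widetilde{\{2\cdot 3^k\}}$, so the intersection is infinite; concretely, the corresponding Riesz products satisfy $\widehat{\mu_1\ast\mu_2}(2\cdot 3^k)=\frac{a_{k+1}a_k}{4}\cdot\frac{a_k}{2}\neq 0$ for every $k$, so $\mu_1\ast\mu_2$ is not a trigonometric polynomial and you have no grounds for placing it in $\mathscr{C}$. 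The repair is exactly the paper's arrangement: fix a single lacunary sequence $A=\{n_k\}$ with $n_{k+1}/n_k\geq 3$ \emph{first}, apply Sierpi\'nski's Proposition \ref{sier} to the index set to obtain almost disjoint subsets $A_\alpha\subset A$, and build each Riesz product on $A_\alpha$; then all the supports are subsets of one lacunary sequence and Lemma \ref{ari} applies as stated.
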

\begin{proof}
Let us take any sequence $(n_{k})_{k=1}^{\infty}$ of positive
numbers such that $\frac{n_{k+1}}{n_{k}}\geq 3$ for
$k\in\mathbb{N}$. Then by Proposition $\ref{sier}$ there exists
uncountably many subsets $\{A_{\alpha}\}_{\alpha\in\mathbb{R}}$ of
$A=\{n_{k}:k\in\mathbb{N}\}$ with the property $A_{\alpha_{1}}\cap
A_{\alpha_{2}}$ is finite for any
$\alpha_{1},\alpha_{2}\in\mathbb{R}$, $\alpha_{1}\neq\alpha_{2}$.
For $\alpha\in\mathbb{R}$ we define the sequence
$(a_{k}^{\alpha})_{k=1}^{\infty}$ by the formula:
$a_{k}^{\alpha}=1$ for $k\in A_{\alpha}$ and $a_{k}^{\alpha}=0$
otherwise. We assign to every $A_{\alpha}$ the Riesz product
\begin{equation*}
\mu_{\alpha}:=R(a_{k}^{\alpha},n_{k})=\prod_{k=1}^{\infty}(1+a_{k}^{\alpha}\cos(n_{k}t)).
\end{equation*}
Then $S(\mu_{\alpha})=\widetilde{A_{\alpha}}$. By the assumption
and Lemma $\ref{ari}$ we obtain $S(\mu_{\alpha_{1}})\cap
S(\mu_{\alpha_{2}})$ is finite for
$\alpha_{1},\alpha_{2}\in\mathbb{R}$, $\alpha_{1}\neq\alpha_{2}$.
Now,
\begin{equation*}
S(\mu_{\alpha_{1}}\ast\mu_{\alpha_{2}})=S(\mu_{\alpha_{1}})\cap
S(\mu_{\alpha_{2}})\text{ is finite }.
\end{equation*}
Hence $\mu_{\alpha_{1}}\ast\mu_{\alpha_{2}}$ is a trigonometric
polynomial and it is easy to prove that all trigonometric
polynomials belong to $\mathscr{C}$. Let us define for $\mu\in
M(\mathbb{T})$
\begin{equation*}
\widetilde{S}(\mu)=\{\varphi\in\mathfrak{M}(M(\mathbb{T})):\widehat{\mu}(\varphi)\neq
0\}.
\end{equation*}
We also recall that
$\mathfrak{M}(M(\mathbb{T}))=\mathfrak{M}(M(\mathbb{T}))\setminus
h(M_{0}(\mathbb{T}))\cup\mathfrak{M}(M_{0}(\mathbb{T}))$ where
\begin{equation*}
h(M_{0}(\mathbb{T}))=\{\varphi\in\mathfrak{M}(M(\mathbb{T})):\varphi(\mu)=0\text{
for all }\mu\in M_{0}(\mathbb{T})\}.
\end{equation*}
Let us fix $\alpha_{1},\alpha_{2}\in\mathbb{R}$,
$\alpha_{1}\neq\alpha_{2}$ and take
$\varphi\in\mathfrak{M}(M(\mathbb{T}))\setminus\mathbb{Z}$. If
$\varphi\in\widetilde{S}(\mu_{\alpha_{1}})$, then
$\varphi(\mu_{\alpha_{1}}\ast\mu_{\alpha_{2}})=0$ which follows
from the theorem of Zafran if
$\varphi\in\mathfrak{M}(M_{0}(\mathbb{T}))$ and just from the
definition, if $\varphi\in\mathfrak{M}(M(\mathbb{T}))\setminus
h(M_{0}(\mathbb{T}))$. From this we get
$\varphi\notin\widetilde{S}(\mu_{\alpha_{2}})$ which we can
summarize as
\begin{equation}\label{tralal}
\widetilde{S}(\mu_{\alpha_{1}})\cap\widetilde{S}(\mu_{\alpha_{1}})\cap\mathfrak{M}(M(\mathbb{T}))\setminus\mathbb{Z}=\emptyset\text{
for }\alpha_{1}\neq\alpha_{2}.
\end{equation}
Using Theorem $\ref{bbmo}$ we know that there exists
$z\in\mathbb{C}\setminus\mathbb{R}$ in $\sigma(\mu_{\alpha})$ and,
recalling that
$\widehat{\mu_{\alpha}}(\mathbb{Z})\subset\mathbb{R}$, we are able
to find an open neighborhood $U$ of $z$ which does not intersect
the real line. Since
$\widehat{\mu_{\alpha}}:\mathfrak{M}(M(\mathbb{T}))\mapsto\mathbb{C}$
is a continuous function we get
$\widehat{\mu_{\alpha}}^{-1}(U)=T_{\alpha}$ is an open set
contained in $\mathfrak{M}(M(\mathbb{T}))\setminus\mathbb{Z}$. On
the other hand, $T_{\alpha}\subset\widetilde{S}(\mu_{\alpha})$ and
hence $T_{\alpha_{1}}\cap T_{\alpha_{2}}=\emptyset$ for
$\alpha_{1}\neq\alpha_{2}$ by ($\ref{tralal}$) which finishes the
proof.
\end{proof}
In the same manner we prove the following corollary which explains why determining spectra of measures is so difficult task.
\begin{cor}
There exist no countable set of multiplicative linear functionals on $M(\mathbb{T})$ such that the spectrum of any measure from $M(\mathbb{T})$ is a closure of the values of its Gelfand transform restricted to this set.
\end{cor}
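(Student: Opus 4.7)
\emph{Proof proposal.} The plan is to argue by contradiction, recycling the setup of the theorem just proved. Suppose $\Phi=\{\varphi_{n}\}_{n=1}^{\infty}\subset\mathfrak{M}(M(\mathbb{T}))$ is a countable set of multiplicative linear functionals with the property that $\sigma(\mu)=\overline{\{\varphi_{n}(\mu):n\in\mathbb{N}\}}$ for every $\mu\in M(\mathbb{T})$. I would then carry over verbatim the uncountable family of Riesz products $\{\mu_{\alpha}\}_{\alpha\in\mathbb{R}}$ constructed in the proof of the previous theorem, together with the associated open sets $T_{\alpha}=\widehat{\mu_{\alpha}}^{-1}(U_{\alpha})\subset\mathfrak{M}(M(\mathbb{T}))\setminus\mathbb{Z}$, where $U_{\alpha}$ is an open neighborhood of a chosen non-real point $z_{\alpha}\in\sigma(\mu_{\alpha})$ that is disjoint from the real line (the existence of such a $z_{\alpha}$ is guaranteed by Theorem \ref{bbmo}, since each $\mu_{\alpha}$ is a hermitian measure all of whose convolution powers are mutually singular).

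The main new step is to observe that each $T_{\alpha}$ must contain at least one element of $\Phi$. Indeed, $z_{\alpha}$ lies in the open set $U_{\alpha}\cap\sigma(\mu_{\alpha})$, and by the assumed property of $\Phi$ we have $\sigma(\mu_{\alpha})=\overline{\{\varphi_{n}(\mu_{\alpha}):n\in\mathbb{N}\}}$, so the sequence $\{\varphi_{n}(\mu_{\alpha})\}$ must enter every neighborhood of $z_{\alpha}$; in particular $\varphi_{n(\alpha)}(\mu_{\alpha})\in U_{\alpha}$ for some $n(\alpha)\in\mathbb{N}$, which means $\varphi_{n(\alpha)}\in T_{\alpha}$. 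But (\ref{tralal}) forces the family $\{T_{\alpha}\}_{\alpha\in\mathbb{R}}$ to be pairwise disjoint, so the assignment $\alpha\mapsto n(\alpha)$ is injective. This contradicts the countability of $\Phi$, since $\alpha$ ranges over the uncountable set $\mathbb{R}$.

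The proof is essentially a counting argument on top of the previous theorem, so I do not expect a serious obstacle. The single delicate point is the use of Theorem \ref{bbmo}: one really needs $\sigma(\mu_{\alpha})$ to contain a genuine two-dimensional region (a full closed disk) while $\widehat{\mu_{\alpha}}(\mathbb{Z})\subset\mathbb{R}$, so that $U_{\alpha}$ can be chosen to meet $\sigma(\mu_{\alpha})$ but avoid the image of the integer characters. Once this asymmetry between $\sigma(\mu_{\alpha})$ and $\widehat{\mu_{\alpha}}(\mathbb{Z})$ is in place, the rest of the argument is purely formal.
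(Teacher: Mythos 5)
Your argument is correct and is precisely what the paper intends: the authors prove the corollary ``in the same manner'' as the preceding theorem, i.e.\ by noting that each of the uncountably many pairwise disjoint open sets $T_{\alpha}$ would have to capture one of the countably many functionals, since the non-real point $z_{\alpha}\in\sigma(\mu_{\alpha})$ must be approximated by values $\varphi_{n}(\mu_{\alpha})$. Your counting step and the use of Theorem \ref{bbmo} to separate $\sigma(\mu_{\alpha})$ from $\widehat{\mu_{\alpha}}(\mathbb{Z})\subset\mathbb{R}$ match the paper's construction exactly.
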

We are ready now to give a proof of Proposition \ref{rozl} (we
follow notation from the last proof).
\\
Let $U,V\in \beta A_\alpha$ be two different ultrafilters. Then
there exists $X\in U$ and $Y\in V$ such that $X\cap Y=\emptyset$.
Let $\mu_X$ and $\mu_Y$ be the Riesz products built on the sets
$X$ and $Y$ respectively. Obviously we have
$\widehat{\mu}_X(n)=1/2$ for $n\in X$ and, by Lemma 15,
$\widehat{\mu}_Y(n)=0$ for sufficiently big $n\in X$. Therefore
$\lim_U \widehat{\mu}_X(n)=1/2$ while
$\lim_V\widehat{\mu}_X(n)=0$. Hence the map $\Lambda: \beta
A_\alpha\to \mathfrak{M}(M(\mathbb{T}))$ given by the formula
$\Lambda_\alpha (U)(\mu)=\lim_U\widehat{\mu}(n)$ is injective.
Then $\Lambda_\alpha(\beta A_\alpha)$ is a homeomorphic copy of
$\beta\mathbb{N}$. Exactly in the same way we prove that
$\Lambda_\alpha(A_\alpha)\cap\Lambda_\beta(A_\beta)=\emptyset$ for
$\alpha\neq\beta$.

\end{document}